\let\origsection=\section 
\def\section{\@ifstar{\origsection*}{\mysection}}
\def\mysection{\@startsection{section}{1}\z@{.7\linespacing\@plus\linespacing}{.5\linespacing}{\normalfont\scshape\centering\S}}
\renewcommand{\PrintDOI}[1]{\doi{#1}}
\def\itm#1{\rm ({#1})}
\def\itmit#1{\itm{\it #1\,}}
\def\rom{\itmit{\roman{*}}}
\newcommand{\oldqed}{}
\def\endofFact{\hfill\scalebox{.6}{$\Box$}}
\newenvironment{claimproof}[1][Proof]{
  \renewcommand{\oldqed}{\qedsymbol}
  \renewcommand{\qedsymbol}{\endofFact}
  \begin{proof}[#1]
}{
  \end{proof}
  \renewcommand{\qedsymbol}{\oldqed}
} 
\let\polishlcross=\l
\def\l{\ifmmode\ell\else\polishlcross\fi}
\def\cP{\mathcal{P}}
\def\PP{\mathbb{P}}
\let\setminus=\smallsetminus
\def\moverlay{\mathpalette\mov@rlay}
\def\mov@rlay#1#2{\leavevmode\vtop{%
   \baselineskip\z@skip \lineskiplimit-\maxdimen
   \ialign{\hfil$\m@th#1##$\hfil\cr#2\crcr}}}
\newcommand{\charfusion}[3][\mathord]{
    #1{\ifx#1\mathop\vphantom{#2}\fi
        \mathpalette\mov@rlay{#2\cr#3}
      }
    \ifx#1\mathop\expandafter\displaylimits\fi}
\newtheorem{theorem}{Theorem}
\newtheorem{lemma}[theorem]{Lemma}
\newtheorem{claim}[theorem]{Claim}
\newtheorem{subclaim}[theorem]{Sub-Claim}
\newtheorem{proposition}[theorem]{Proposition}
\newtheorem{definition}[theorem]{Definition}
\newtheorem{fact}[theorem]{Fact}
\newcommand\rb{\text{{\tiny\rm rb}}}
\newcommand{\mcarrow}{\xrightarrow{\rb}}
\newcommand{\nmcarrow}{{\centernot{\xrightarrow{\rb \, \,}}}}
\def\pmc#1{p^{\rm rb}_{#1}}
\newcommand*\patchAmsMathEnvironmentForLineno[1]{%
\expandafter\let\csname old#1\expandafter\endcsname\csname #1\endcsname
\expandafter\let\csname oldend#1\expandafter\endcsname\csname end#1\endcsname
\renewenvironment{#1}%
{\linenomath\csname old#1\endcsname}%
{\csname oldend#1\endcsname\endlinenomath}}%
\newcommand*\patchBothAmsMathEnvironmentsForLineno[1]{%
\patchAmsMathEnvironmentForLineno{#1}%
\patchAmsMathEnvironmentForLineno{#1*}}%
\begin{document}
\shortdate
\yyyymmdddate
\settimeformat{ampmtime}
\date{\today, \currenttime}

\title[The anti-Ramsey threshold of complete graphs]%
{The anti-Ramsey threshold of complete graphs}

\author[Y.~Kohayakawa]{Yoshiharu Kohayakawa}
\address{(Y. Kohayakawa and G. O. Mota) Instituto de Matem\'atica e Estat\'{\i}stica, Universidade de S\~ao Paulo, S\~ao Paulo, Brazil}
\email{yoshi@ime.usp.br, mota@ime.usp.br}
\author[G.~O.~Mota]{Guilherme Oliveira Mota}

\author[O.~Parczyk]{Olaf Parczyk}
\address{(O. Parczyk) Department of Mathematics and Computer Science, Freie Universität Berlin, Berlin, Germany }
\email{parczyk@mi.fu-berlin.de}

\author[J.~Schnitzer]{Jakob Schnitzer}
\address{(J.~Schnitzer) Fachbereich Mathematik, Universit\"at Hamburg, Hamburg, Germany}
\email{jakob.schnitzer@uni-hamburg.de}

\thanks{\rule[-.2\baselineskip]{0pt}{\baselineskip}%
  The first author was partially supported by CNPq (311412/2018-1,
  423833/2018-9, 406248/2021-4) and FAPESP (2018/04876-1,
  2019/13364-7).
  The second author was partially supported by FAPESP (2018/04876-1,
  2019/13364-7) and CNPq (304733/2017-2, 428385/2018-4). 
  The third author was supported by DFG grant PE
  2299/1-1.
  The cooperation of the authors was supported by a joint CAPES/DAAD
  PROBRAL project (Proj.~430/15, 57350402, 57391197).
  This research was partially supported by CAPES (Finance Code 001).
  FAPESP is the S\~ao Paulo Research Foundation.  CNPq is the National
  Council for Scientific and Technological Development of Brazil.%
}

\begin{abstract}
  For graphs~$G$ and~$H$, let~$G\mcarrow H$ denote the property that,
  for every proper edge-colouring of~$G$, there is a rainbow~$H$
  in~$G$.  For every graph~$H$, the threshold
  function~$\pmc H=\pmc H(n)$ of this property in the random
  graph~$G(n,p)$ satisfies~$\pmc H=O(n^{-1/m^{(2)}(H)})$, where
  $m^{(2)}(H)$ denotes the so-called maximum $2$-density of $H$.
  Completing a result of Nenadov, Person, {\v
    S}kori{\'c}, and Steger [{\frenchspacing J.~Combin.\ Theory Ser.~B
    \textbf{124} (2017), 1--38}], we prove a matching lower bound for
  $\pmc {K_k}$ for $k\geq 5$.  Furthermore, we show that
  $\pmc {K_4} = n^{-7/15} \ll n^{-1/m^{(2)}(K_4)}$.
\end{abstract}

\keywords{Anti-Ramsey property, proper colourings, random graphs,
  thresholds}
\subjclass[2010]{05C80 (primary), 05C55 (secondary)}

\maketitle
\footskip=28pt
\pagestyle{plain}

\section{Introduction}\label{sec:Introduction}

As usual, we write $G(n,p)$~for the binomial random graph and, given a
monotone increasing property~$\cP$ of graphs, we call a function
$\hat p=\hat p(n)$ a \emph{threshold function} for~$\cP$ if
\begin{equation}
  \label{eq:1}
  \lim_{n\to\infty}\PP(G(n,p)\in\cP)=
  \begin{cases}
    0 &\text{if $p\ll\hat p$}\\
    1 &\text{if $p\gg\hat p$}.
  \end{cases}
\end{equation}
In~\eqref{eq:1} and in what follows, $f\ll g$ stands for
$\lim_nf(n)/g(n)=0$ and $f\gg g$ stands for $g\ll f$.  Following
standard practice, we refer to any~$\hat p$ as in~\eqref{eq:1} as
\textit{the} threshold function for~$\cP$ even though threshold
functions are not uniquely defined.  When $\lim_n\PP(G(n,p)\in\cP)=1$,
we say that~$G(n,p)$ has~$\cP$ \textit{with high probability}.  For
notation and terminology not explicitly defined here,
see~\cite{Bo98,JLR00}.

In a major breakthrough in the area of random graphs, R\"odl and
Ruci\'nski~\cite{RoRu93, RoRu95} established the threshold function
for the property $G(n,p)\to(H)_r$ for any given graph~$H$, that is,
the property that, for every edge-colouring of~$G(n,p)$ with at
most~$r$ colours, there exists a monochromatic copy of~$H$
in~$G(n,p)$.  Their result is in fact more refined, but it implies
that, as long as~$H$ is not a star-forest, the threshold
is~$n^{-1/m^{(2)}(H)}$, where $m^{(2)}(H)$ stands for the
\emph{maximum $2$-density} of~$H$, given by
\begin{equation}
  \label{eq:2}
  m^{(2)}(H) = \max\left\{{|E(J)|-1\over |V(J)|-2}\colon J\subseteq
    H,\,|V(J)|\ge 3 \right\}
\end{equation}
(for simplicity, we suppose~$|V(H)|\geq3$ in~\eqref{eq:2}).
Since this result was obtained, this line of research developed into a very
rich area.  We refer the reader to~\cite[Chapter~8]{JLR00} and
Conlon~\cite{Co14} for an overview of this line of research.

In this paper we study a variation of the classical Ramsey property
described above. Given graphs~$G$ and~$H$, we denote by $G\mcarrow H$
the so-called \emph{anti-Ramsey} property of~$G$ with respect to~$H$:
for every \textit{proper} edge-colouring of~$G$ there exists a
\textit{rainbow copy} of~$H$ in~$G$, i.e., a copy of~$H$ where no two
edges are assigned the same colour.  Note that the property
$G\mcarrow H$ is monotone in~$G$ for every fixed graph $H$, and hence
it admits a threshold~\cite{BoTh87}, which we denote by $\pmc H$.

The study of anti-Ramsey properties of random graphs was initiated by
R\"odl and Tuza~\cite{RoTu92}, who proved that for every integer
$\ell\geq 4$ we have $G(n,p)\mcarrow C_\ell$ with high probability if
$p \ge n^{-(\ell-2)/(\ell-1)} = n^{-1/m^{(2)}(C_\ell)}$.  The general case was studied
in~\cite{KoKoMo12}, where the following result was proved.

\begin{theorem}
  \label{thm:conditional}
  Let $H$ be a graph.  Then there exists a constant $C>0$ such that
  for $p=p(n)\geq Cn^{-1/m^{(2)}(H)}$ we have ${G(n,p)\mcarrow H}$
  with high probability.
\end{theorem}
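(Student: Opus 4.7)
The plan is to combine the rigid structure of proper edge-colourings (each colour class is a matching) with supersaturation estimates for copies of~$H$ in~$G(n,p)$, and a transference argument from the dense setting to the sparse one. The overall strategy parallels the R\"odl--Ruci\'nski proof of Theorem~\ref{thm:RR}, adapted to the fact that here one forbids rainbow copies rather than monochromatic ones.

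I would start by fixing $p=Cn^{-1/m^{(2)}(H)}$ with $C$ large and verifying, via Janson's inequality and standard concentration, that $G=G(n,p)$ almost surely satisfies the quasirandomness properties associated with this threshold: $G$ contains $\Theta\bigl(n^{v(H)}p^{e(H)}\bigr)$ copies of~$H$, and for every subgraph $J\subseteq H$ with $v(J)\ge 3$ every prescribed pair of vertices is contained in at most $O\bigl(n^{v(J)-2}p^{e(J)-1}\bigr)$ copies of~$J$, while every prescribed pair of vertex-disjoint edges (when $v(J)\ge 4$) is contained in at most $O\bigl(n^{v(J)-4}p^{e(J)-2}\bigr)$ copies of~$J$. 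These bounds follow directly from the definition of $m^{(2)}(H)$. Now fix a proper edge-colouring $\phi$ of~$G$; if no copy of~$H$ is rainbow, then, since every colour class of $\phi$ is a matching, each copy of~$H$ must contain at least one pair of vertex-disjoint edges of equal $\phi$-colour.

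The naive double-count---bounding the number of copies of~$H$ by the number of monochromatic disjoint edge pairs times the maximum codegree---does not close by itself, because the number of same-colour matching pairs in a worst-case proper colouring can be as large as $\Theta\bigl(|E(G)|\cdot n\bigr)$, which exceeds what the codegree bound of the previous paragraph will absorb at the threshold density. To close the gap, I would pass through sparse regularity: apply the sparse regularity lemma to~$G$, restrict to the dense $(\varepsilon,p)$-regular pairs, and reduce to a \emph{deterministic} anti-Ramsey statement asserting that every proper edge-colouring of a sufficiently dense pseudo-random $n$-vertex graph contains a rainbow~$H$, which can be dispatched by a pigeonhole on colour classes (using that there are at least $\Delta(G)$ of them). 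A cleaner alternative is to invoke Schacht's transference theorem, or the hypergraph container method, applied to the hypergraph on~$E(K_n)$ whose hyperedges are the edge sets of copies of~$H$, and to transfer the dense statement to~$G(n,p)$ directly. The step I expect to be the main obstacle is precisely this transference: finding a \emph{rainbow} $H$ demands more than counting $H$'s, as one must guarantee that the chosen copy avoids \emph{every} monochromatic matching pair dictated by~$\phi$; checking that the refined embedding properties needed for this survive the passage from dense graphs to $G(n,p)$ is where the condition $p\ge Cn^{-1/m^{(2)}(H)}$ is used in a genuinely tight way.
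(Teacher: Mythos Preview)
The paper does not prove Theorem~\ref{thm:conditional}; it is quoted from~\cite{KoKoMo12} as background. There is therefore no proof in the present paper to compare your attempt against.

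On the substance of your sketch: you name the right high-level ingredients --- a dense anti-Ramsey statement together with a transference to~$G(n,p)$ via sparse regularity --- and this is indeed the route taken in~\cite{KoKoMo12}. But what you have written is a plan with its core step explicitly left open: you yourself flag the transference of the \emph{rainbow} embedding as ``the main obstacle'' and do not carry it out. That step is the entire content of the theorem. Two specific concerns. First, appealing to ``Schacht's transference theorem, or the hypergraph container method'' off the shelf does not work here, because the property ``every proper edge-colouring admits a rainbow~$H$'' is not of the monotone Tur\'an type those frameworks address; one needs a tailored rainbow embedding lemma inside sparse $(\varepsilon,p)$-regular partitions, and that is what~\cite{KoKoMo12} actually supplies. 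Second, your claim that the dense deterministic statement ``can be dispatched by a pigeonhole on colour classes (using that there are at least~$\Delta(G)$ of them)'' is too optimistic: knowing that many colours appear does not by itself produce a rainbow copy of a given~$H$; a genuine argument is required there as well.
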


Note that Theorem~\ref{thm:conditional} implies that
$\pmc H\leq n^{-1/m^{(2)}(H)}$.  However, in contrast to the
R\"odl--Ruci\'nski case, it was established in~\cite{KoKoMo16+} that
there are infinitely many ``non-trivial'' graphs~$H$ for which
$\pmc H\ll n^{-1/m^{(2)}(H)}$, highlighting the interest on the
problem of establishing \textit{lower bounds} for~$\pmc H$.

Nenadov, Person, {\v S}kori{\'c} and Steger~\cite{NePeSkSt14}
developed a general framework for proving lower bounds for various
Ramsey type problems within random settings. Applying this framework
to the anti-Ramsey problem, they proved that the upper bound given in
Theorem~\ref{thm:conditional} is sharp for sufficiently large cycles
and complete graphs.

\begin{theorem}[Nenadov, Person, {\v S}kori{\'c} and
  Steger~\cite{NePeSkSt14}]
  \label{thm:cycles-complete-graphs}
  Let $H$ be a cycle on at least $7$ vertices or a complete graph on
  at least $19$ vertices.  Then there exists a constant $c>0$ such
  that for $p=p(n) \le cn^{-1/m^{(2)}(H)}$ we have ${G(n,p) \nmcarrow
    H}$ with high probability.
\end{theorem}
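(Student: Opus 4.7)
The upper bound $\pmc{H} \leq n^{-1/m^{(2)}(H)}$ is immediate from Theorem~\ref{thm:conditional}, so the content of Theorem~\ref{thm:cycles-complete-graphs} is the matching lower bound. My plan is therefore to show that for some constant $c>0$, if $p = p(n) \leq c\, n^{-1/m^{(2)}(H)}$, then a.a.s.\ $G = G(n,p)$ admits a proper edge-colouring in which no copy of $H$ is rainbow. I would construct such a colouring explicitly via \emph{blockers}: for each copy $H'$ of $H$ in $G$, select a pair of vertex-disjoint edges of $H'$ and paint both edges with a common fresh colour, while painting each remaining edge of $G$ with its own private colour. Each colour class is then a matching (of size~$1$ or~$2$), so the colouring is proper, and every copy of $H$ contains two edges sharing a colour, hence is not rainbow.

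For this to be well-defined, distinct blockers cannot share an edge, so the combinatorial core of the argument is to select, for every copy of $H$ in~$G$, a pair of independent edges in such a way that across all copies no edge belongs to two blockers. I would phrase this as a matching-type problem on an auxiliary hypergraph whose hyperedges are the independent-edge pairs inside each copy of~$H$, and solve it by a greedy or Hall-type argument exploiting the fact that each copy of~$H$ (being a long cycle or a large complete graph) possesses many candidate blockers, providing slack even when copies of~$H$ overlap.

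The main obstacle is controlling the overlaps between copies of $H$ in $G(n,p)$. At the threshold density, distinct copies are not essentially disjoint; for instance, pairs of $K_k$ copies sharing a triangle have positive expected count, and must be handled explicitly. The strategy is to enumerate all possible subgraph-overlap configurations between two (or a few) copies of $H$, show via a careful density calculation based on $m^{(2)}(H)$ that the truly problematic configurations have vanishing expected count for $p \ll n^{-1/m^{(2)}(H)}$, and then argue combinatorially that the surviving overlap types still leave each copy with enough untouched blocker candidates. The explicit constants $\ell \geq 7$ and $k \geq 19$ arise exactly where these density inequalities first become affirmative; below these thresholds the method breaks down, which is consistent with the separate treatment the present paper gives to $K_4$. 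Once the overlap analysis is in place, the greedy blocker selection and extension to a global proper colouring are routine.
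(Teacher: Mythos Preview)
This theorem is quoted from~\cite{NePeSkSt14} and not reproved here, but the paper makes the architecture of that proof explicit: the lower bound is obtained by feeding a purely \emph{deterministic} colouring lemma (Lemma~\ref{lemma:nepeskst}: every graph $G$ with $m(G)<m^{(2)}(H)$ admits a proper edge-colouring with no rainbow~$H$) into the algorithmic framework of~\cite{NePeSkSt14}*{Theorem~7}, which then transfers the conclusion to $G(n,p)$ for $p\ll n^{-1/m^{(2)}(H)}$. The constants $7$ and $19$ arise solely in the proof of the deterministic lemma; the probabilistic part is insensitive to them. Indeed, the contribution of the present paper is precisely to replace Lemma~\ref{lemma:nepeskst} by the stronger Lemma~\ref{lemma:lower-density} (valid for $k\ge5$), after which the random-graph machinery of~\cite{NePeSkSt14} is reused verbatim. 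The colouring in the deterministic lemma is built inductively by peeling off a minimum-degree vertex and colouring a small number of carefully chosen edges so that a \emph{single} colour repetition kills many overlapping copies of $K_k$ at once (see the proof of Lemma~\ref{lemma:stages}).

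Your scheme is genuinely different, and as stated it has a gap. You ask for one blocker \emph{per copy} of $H$, all pairwise edge-disjoint; but in $G(n,p)$ at $p=\Theta(n^{-2/(k+1)})$ there are, with positive probability, clusters in which many copies of $K_k$ sit on a common $K_{k-1}$. In such a cluster every candidate blocker for each copy must use at least one edge of the shared $K_{k-1}$ (the remaining edges of a copy form a star at the extra vertex, so no two of them are independent), and once the number of copies exceeds the number of edges in that $K_{k-1}$ an edge-disjoint per-copy assignment is impossible. A soft Hall-type or greedy argument cannot rescue this, because the obstruction is a hard counting one, not a matter of slack. The actual proof sidesteps the issue entirely by never insisting on a private blocker for each copy: the inductive colouring lets one repeated colour serve every $K_k$ through a given $K(v)$. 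Consequently your diagnosis of where the $k\ge19$ barrier comes from is also off; it is not a by-product of a second-moment overlap enumeration in $G(n,p)$ but the range in which the inductive colouring argument of~\cite{NePeSkSt14} for sparse deterministic graphs goes through.
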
 

In~\cite{BaCaMoPa18+}, Theorem~\ref{thm:cycles-complete-graphs} was
extended to every cycle, as it was proved that
$\pmc {C_\ell} = n^{-1/m^{(2)}(C_\ell)}$ when $\ell\geq 5$ and that
$\pmc {C_4} = n^{-3/4}\ll n^{-1/m^{(2)}(C_4)}$ (when~$\ell=3$, it is
easily seen that~$\pmc {C_3}=n^{-1}$).  In this paper we prove
that Theorem~\ref{thm:cycles-complete-graphs} can be extended to
complete graphs of order $k \ge 5$ ($\pmc{K_k} = n^{-2/(k+1)} = n^{-1/m^{(2)}(K_k)}$) 
and that $\pmc {K_4} = n^{-7/15}\ll n^{-2/5} = n^{-1/m^{(2)}(K_4)}$.

In the framework developed in~\cite{NePeSkSt14}, the probabilistic
problem of showing that with high probability the anti-Ramsey property
does not hold for $G(n,p)$ when $p\ll n^{-1/m^{(2)}(H)}$ is reduced to
a certain deterministic problem involving graphs with bounded
``density''. We will rely on this reduction to prove the following theorem.

\begin{theorem}\label{thm:main}
  For every $k\geq 5$, there exists a constant $c>0$ such that for
  $p=p(n) \le cn^{-1/m^{(2)}(K_k)}$ we have ${G(n,p)\nmcarrow K_k}$ with
  high probability.  Furthermore, $\pmc {K_4} = n^{-7/15} \ll
  n^{-1/m^{(2)}(K_4)}$.
\end{theorem}

Note that Theorems~\ref{thm:conditional} and~\ref{thm:main} together
determine $\pmc{K_k}$ for every~$k\geq4$ (note that
$\pmc{K_3}=\pmc{C_3}=n^{-1}$).
Before moving on, we remark that,
using ideas similar to those used to determine the threshold for~$K_4$
in Theorem~\ref{thm:main}, one can show that
$\pmc {K_4^-} = n^{-2/3} \ll n^{-1/2} = n^{-1/m^{(2)}(K_4^-)}$, where $K_k^-$ denotes the graph obtained
from~$K_k$ by removing an edge.

\subsection{Reduction to a deterministic problem}
 
In view of Theorem~\ref{thm:conditional}, to obtain the threshold
$\pmc {K_k}$ we must prove that for $p \ll n^{-1/m^{(2)}(K_k)}$ with
high probability there exists an edge colouring of $G \sim G(n,p)$ with no
rainbow copies of~$K_k$.

As mentioned before, the framework of Nenadov, Person, \v{S}kori\'{c},
and Steger~\cite{NePeSkSt14} reduces random Ramsey lower bound
problems to certain deterministic problems for graphs with bounded
``density'', where by ``density'' we mean the so called maximum
density.  For a graph~$H$, the \emph{maximum density}~$m(H)$ of~$H$ is
given by
\begin{equation*}
m(H)=\max\left\{{|E(J)|\over|V(J)|}\colon J\subseteq H,\;|V(J)| \geq
  1\right\}
\end{equation*}
and we remark that the threshold for the appearance of a graph $H$ in 
$G(n,p)$ is given by $n^{-1/m(H)}$  (see Theorem~\ref{thm:Bollobas}), 
which is a trivial lower bound for $\pmc H$ and for the threshold for
the property $G(n,p)\to(H)_r$.

Roughly speaking, to obtain a proper edge-colouring of $G \sim G(n,p)$ with
no rainbow $K_k$, the procedure in~\cite{NePeSkSt14} is as follows:
for every pair of disjoint edges that lie in precisely the same
$K_k$'s, give a new colour (the same colour to both edges).  Then give
new colours to edges not contained in $K_k$'s.  Let $\hat G$ be the
graph obtained by removing all edges coloured by this procedure.
In~\cite{NePeSkSt14}\footnote{In fact they prove a more general and
  stronger result.}, the authors prove that, if
$p\ll n^{-1/m^{(2)}(K_k)}$, then with high probability~$\hat G$ is
composed of a family $\mathcal{F}$ of disjoint subgraphs such that we
have $m(F) < m^{(2)}(K_k)$ for every $F\in\mathcal{F}$.  Therefore, to
find the desired colouring of~$G(n,p)$, it suffices to find such a
colouring for every graph~$F$ with $m(F) < m^{(2)}(K_k)$.  In view of
the above discussion, the part of
Theorem~\ref{thm:cycles-complete-graphs} concerning complete graphs
follows from the following result.
  
\begin{lemma}[Nenadov, Person, {\v S}kori{\'c} and
  Steger~\cite{NePeSkSt14}]
  \label{lemma:nepeskst}
  Let $H$ be a complete graph on at least $19$ vertices. Then for any
  graph $G$ with $m(G) < m^{(2)}(H)$ we have $G \nmcarrow H$.
\end{lemma}

The bound of~$19$ in Theorem~\ref{thm:cycles-complete-graphs} is
imposed only by their Lemma~\ref{lemma:nepeskst}, and hence to extend
Theorem~\ref{thm:cycles-complete-graphs} to complete graphs with at
least $5$~vertices, we give a proof of Lemma~\ref{lemma:nepeskst} that
works for all~$K_k$ with $k\geq5$.  Since the proof of
Lemma~\ref{lemma:nepeskst} in~\cite{NePeSkSt14} does not extend to
smaller complete graphs, we employ a new strategy.  Our proof will
involve separating~$G$ with $m(G) < m^{(2)}(K_k)$ into ``chains'' of
$K_k$'s and carefully constructing an edge colouring that avoids
rainbow copies of~$K_k$.

Given a graph $G$, we write $|G|$ for the number of vertices and $e(G)$ for the
number of edges of $G$.
Section~\ref{sec:large-cycles-and-complete-graphs} contains the proof
of Theorem~\ref{thm:main} for complete graphs with at least~$5$
vertices.  We remark that our proof works uniformly for all complete
graphs on at least~$5$ vertices, i.e., not only for complete graphs on
fewer than~$19$ vertices.  The proof of Theorem~\ref{thm:main}
for~$K_4$ is given in Section~\ref{sec:4vertices}.

\section{Proof of Theorem~\ref{thm:main} for $k \ge 5$}
\label{sec:large-cycles-and-complete-graphs}

In this section we prove the following lemma, which gives a lower
bound for $\pmc H$ when $H$ is a complete graph with at least five
vertices. Together with Theorem~\ref{thm:conditional}, this implies
that $\pmc {K_k} = n^{-1/m^{(2)}(H)}$.

\begin{lemma}\label{lemma:lower-large}
  Let $H$ be a complete graph on at least $5$~vertices. If $p=p(n) \ll
  n^{-1/m^{(2)}(H)}$, then ${G(n,p) \nmcarrow K_k}$ with high probability.
\end{lemma}

As discussed in the previous section, we accomplish this by proving
the following lemma.

\begin{lemma}\label{lemma:lower-density}
  Let $H$ be a complete graph on at least five vertices, then for any
  $G$ with $m(G) < m^{(2)}(H)$ we have $G \nmcarrow H$.
\end{lemma}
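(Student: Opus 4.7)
I would prove Lemma~\ref{lemma:lower-density} by induction on $|V(G)|$, following the template of~\cite{NePeSkSt14}*{Lemma~24}; the improvement of the threshold on $k$ from $19$ to $5$ is expected to come from a sharper combinatorial analysis of the extension step rather than a change of overall structure. The base case $|V(G)|<k$ is trivial. For the inductive step, the hypothesis $m(G)<(k+1)/2$ forces the average degree of $G$ to be strictly less than $k+1$, so there exists a vertex $v\in V(G)$ with $d_G(v)\leq k$; since every subgraph of $G-v$ inherits the density bound, the inductive hypothesis supplies a proper edge colouring $c'$ of $G-v$ with no rainbow $K_k$. It remains to extend $c'$ over the at most $k$ edges incident to $v$ without creating a rainbow $K_k$; any new rainbow copy must pass through $v$.

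\textbf{Core extension.} Any $K_k$ through $v$ has the form $\{v\}\cup S$ with $S\subseteq N_G(v)$, $|S|=k-1$, and $G[S]\cong K_{k-1}$; since $d_G(v)\leq k$, there are at most $\binom{k}{k-1}=k$ such cliques. If $G[S]$ is non-rainbow in $c'$, then any proper extension leaves the corresponding $K_k$ non-rainbow. Otherwise, writing $S=\{u_1,\dots,u_{k-1}\}$, it is enough to force $c(vu_i)=c'(u_ju_l)$ for some pairwise distinct indices $i,j,l$: this injects a repeated colour inside the $K_k$ and, by the non-adjacency of $vu_i$ to $u_ju_l$, does not violate properness at $u_i$ provided $c'(u_ju_l)$ is not already used at $u_i$ on an external edge. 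For each index $i$, this gives $\binom{k-2}{2}$ admissible candidate far edges against at most $d_G(u_i)-(k-1)$ blocked candidates. Packaging these constraints over all (at most $k$) problematic cliques through $v$ into a bipartite assignment problem yields a Hall-type condition whose satisfaction completes the extension.

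\textbf{Main obstacle.} The crux is to verify this Hall-type condition, because $m(G)<(k+1)/2$ bounds only average densities, not the individual degrees $d_G(u_i)$, which can be arbitrarily large in principle. I would apply the density bound $2|E(H)|<(k+1)|V(H)|$ to a carefully chosen local subgraph, for instance $H=G[\{v\}\cup N_G(v)\cup W]$ with $W$ the external neighbourhood of $S$; if external edges concentrate on few vertices, $H$ must be enlarged so as to capture the density contributed by those vertices themselves. The resulting inequality leaves slack for a feasible matching exactly when $\binom{k-2}{2}\geq 3$, that is when $k\geq 5$, matching the dichotomy at $k=4$ revealed by Theorem~\ref{thm:main} and explaining why~\cite{NePeSkSt14} needs $k\geq 19$ only for a cruder form of the same inequality.
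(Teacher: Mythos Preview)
Your plan departs from the paper's proof at the very first structural choice, and the gap you yourself call the ``main obstacle'' is, as far as I can see, genuinely not closable along the line you suggest. You remove a single minimum-degree vertex $v$, take \emph{some} full proper colouring $c'$ of $G-v$ supplied by the inductive hypothesis, and then attempt to extend $c'$ over the edges at $v$ via a Hall-type assignment. Two things go wrong. First, the density hypothesis $m(G)<(k+1)/2$ gives no control on the individual quantities $d_G(u_i)-(k-1)$: applying it to $H=G[\{v\}\cup N(v)\cup W]$ yields only
\[
\sum_i\bigl(d_G(u_i)-(k-1)\bigr)\;<\;\tfrac{k+1}{2}\bigl(1+|W|\bigr)+O(1),
\]
which is vacuous as soon as $|W|$ is of the same order as the left-hand side; enlarging $H$ further only adds $(k+1)/2$ of slack per new vertex and never pins down a single $d_G(u_i)$. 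Second, and more fundamentally, the inductive hypothesis hands you one particular $c'$ with no leverage over which colours appear at the $u_i$; an adversarial $c'$ can occupy every candidate colour at every $u_i$, and you have no mechanism to go back and alter $c'$. The ``Hall-type condition'' is also underspecified: the constraints (one colour per edge $vu_i$, distinct colours at $v$, and a repeat inside every rainbow $K_{k-1}$) do not form a bipartite matching instance in any obvious way.

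The paper avoids this by strengthening the inductive invariant rather than the extension step. It works with \emph{partial} proper colourings and tracks five nested stages $P_0,\dots,P_4$ that bound how many coloured edges any four vertices may span. Instead of deleting $v$ alone it deletes the whole set $R(v)$ of vertices whose every $K_k$ contains $v$, classifies the finitely many possible shapes of $K(v)=G[\{v\}\cup N(v)]$ into types $X_\ell$, $Y_\ell$, $U_1$, and uses a potential $b(G)=2e(G)-(k+1)v(G)+2k\in[0,2k)$ to pay for each stage advance (Lemma~\ref{lemma:stages}). The crucial point is that at every step there remain \emph{uncoloured} edges in $S(v)$, so one can introduce a fresh colour on two disjoint edges of $K(v)$ rather than being forced to reuse a colour already present at a high-degree $u_i$. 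This bookkeeping replaces the degree bound you are looking for, and the inequalities $(k-\ell-2)\ell\ge k-3$ and $(k-\ell)\ell\ge k-1$ that drive it are exactly where $k\ge5$ enters.
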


In the remainder of this section we prove Lemma~\ref{lemma:lower-density}.
In what follows we outline the ideas of our proof, analysing the structure of some subgraphs that will be important in our proof strategy (see Proposition~\ref{def:charac} and Definition~\ref{def:stages}).
We finish by proving an inductive result (Lemma~\ref{lemma:stages}) that directly implies Lemma~\ref{lemma:lower-density}.

In what follows, let $k\geq 5$ and let $G$ be a connected graph with
$m(G) < m^{(2)}(K_k) = (k+1)/2$.  Since we are interested in obtaining
a colouring such that every copy of $K_k$ is non-rainbow, we may
assume that all vertices and edges of $G$ are contained in a copy of
$K_k$. In this direction, we say that two copies of $K_k$ are
\emph{$K_k$-connected} if they are connected in the auxiliary graph 
that has every copy of $K_k$ as vertices and edge-intersecting copies of 
$K_k$ as edges. Furthermore, a subgraph of $G$ is a
\emph{$K_k$-component} if any edge and vertex is contained in a copy
of $K_k$ and any pair of copies of $K_k$ is \emph{$K_k$-connected}.
Clearly, we may assume that $G$ contains only a single
$K_k$-component, as $K_k$-components are edge disjoint and the
colourings of all its $K_k$-components induce a colouring of $G$.

Let $v$ be a vertex of minimum degree.
A simple but important observation is that since $m(G) < (k+1)/2$, the average degree in $G$ is less than $k+1$.
Thus, $v$ has degree at most~$k$.
The following induced subgraphs of $G$ on $v$ and some of its neighbours play a special role in our proof:
\begin{itemize}
    \item $K(v)$: induced subgraph of $G$ on $\{v\} \cup N(v)$;
    \item $R(v)$: induced subgraph of $G$ on $\{v\} \cup \{ w \in N(v)\colon$ every copy of $K_k$ containing $w$ also contains $v\}$;
    \item $S(v)$: induced subgraph of $G$ on $V(K(v)) \setminus V(R(v))$.
\end{itemize}
Furthermore, we define the following graphs:
\begin{itemize}
    \item $G_v^\ast$: the induced graph on the vertices $V(G) \setminus V(R(v))$;
    \item $G_v$: the graph obtained from $G_v^\ast$ by removing all edges not contained in a copy of $K_k$ in $G_v^\ast$.
\end{itemize}

In the inductive colouring strategy for Lemma~\ref{lemma:stages}, the
induction step will assume a colouring of $G_v$ that does not contain
a rainbow copy of $K_k$ and produce a colouring of~$G$ with the same
property.  The following simple fact provides useful information about
the structure of $G_v$.

\begin{fact}\label{fact:b_v}
Let $k\geq 5$ and let $G$ be a graph on at least $k+1$ vertices with $m(G) < (k+1)/2$ such that all vertices and edges of $G$ are contained in a copy of $K_k$.
Let $v$ be a vertex of minimum degree in $G$.
Then the following hold:
\begin{enumerate}[label=\rom]
    \item\label{fact:b_v-1} If $|G_v| \le k$ then $G_v$ is isomorphic to $K_k$;
    \item\label{fact:b_v-2} $|R(v)| \leq k-1$.
\end{enumerate}
\end{fact}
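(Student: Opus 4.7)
The driver of the argument is one structural observation about $V(R(v))$: for every $w\in V(R(v))\setminus\{v\}$, each edge of $G$ incident to $w$ lies in some $K_k$ of $G$ which, by the very definition of $R(v)$, must contain $v$; hence $N_G(w)\subseteq V(K(v))\setminus\{w\}$. Since $v$ has minimum degree and the right-hand set has exactly $\deg(v)$ vertices, this forces $N_G(w)=V(K(v))\setminus\{w\}$ and $\deg(w)=\deg(v)$. Consequently $V(R(v))$ induces a clique in $G$ and each of its vertices is joined to every other vertex of $V(K(v))$. Applying $m(G)<(k+1)/2$ to $G$ itself gives average degree strictly less than $k+1$, so $\deg(v)\le k$ and $|V(R(v))|\le|V(K(v))|=\deg(v)+1\le k+1$.

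\textbf{Part (ii).} I would assume for contradiction that $|V(R(v))|\ge k$ and split on whether $V(R(v))=V(K(v))$. If equality holds, the observation above makes $V(R(v))$ a clique with no edges leaving it; since $G$ is a single $K_k$-component this forces $V(G)=V(R(v))$, and combined with $|V(G)|\ge k+1$ this rules out $|V(R(v))|=k$ and yields $G=K_{k+1}$ -- but a direct check in $K_{k+1}$ (each $w\ne v$ lies in the $K_k$ obtained by deleting $v$) gives $R(v)=\{v\}$, the desired contradiction. If equality fails, there is precisely one $u\in V(K(v))\setminus V(R(v))$, and by the observation $u$ is adjacent to every element of $V(R(v))\setminus\{v\}$; then $(V(R(v))\setminus\{v\})\cup\{u\}$ is a $K_k$ through any $w\in V(R(v))\setminus\{v\}$ that avoids $v$, contradicting $w\in R(v)$.

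\textbf{Part (i).} Suppose $v(G_v)\le k$. I would first show $v(G_v)\ge k$: otherwise no $K_k$ of $G$ fits inside $V(G_v)$, so every $K_k$ of $G$ meets $V(R(v))$ and (by the same observation) contains $v$; $K_k$-connectedness then forces $V(G)\subseteq V(K(v))$, so $|V(G)|\ge k+1$ gives $\deg(v)=k$, and the minimum-degree condition upgrades $G$ to $K_{k+1}$, in which $R(v)=\{v\}$ and hence $v(G_v)=k$, a contradiction. With $v(G_v)=k$ established, pick any $u\in V(G_v)$ and a $K_k$ of $G$ through $u$: if this $K_k$ lies in $V(G_v)$, it coincides with $G[V(G_v)]$ (both have $k$ vertices), so $G_v=G[V(G_v)]=K_k$; otherwise it meets $V(R(v))$ and therefore contains $v$, placing $u\in N(v)$. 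If this second alternative occurs for every $u\in V(G_v)$, then $V(G_v)\subseteq N(v)$, we reach the same collapse $G=K_{k+1}$, and conclude $G_v=G[N(v)]=K_k$ anyway.

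\textbf{Main obstacle.} The subtlest step is the repeated collapse to $G=K_{k+1}$: one must combine the fact that $V(R(v))$-vertices have neighborhoods inside $V(K(v))$ with the minimum-degree bound to pass from $V(G)\subseteq V(K(v))$ to $G=K_{k+1}$, and then observe by hand that in $K_{k+1}$ the set $V(R(v))$ is just $\{v\}$. Once that boundary case is dispatched, everything else is a clean case split on whether a given $K_k$ of $G$ meets $V(R(v))$.
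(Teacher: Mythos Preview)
Your proof is correct and follows essentially the same ideas as the paper, organised more cleanly around the single observation $N_G(w)=V(K(v))\setminus\{w\}$ for $w\in V(R(v))\setminus\{v\}$; the paper's treatment of~(i) in particular is much terser and leaves implicit the verification that every vertex of $G_v$ lies in a $K_k$ of $G_v$, which your collapse-to-$K_{k+1}$ argument spells out. Two minor remarks: in Part~(i) the appeal to $K_k$-connectedness is unnecessary, since once every $K_k$ of $G$ contains $v$ the hypothesis that each vertex lies in some $K_k$ already forces $V(G)\subseteq V(K(v))$; and in Part~(ii) your use of the single-$K_k$-component assumption is legitimate as it is a standing hypothesis in the paper even though it is not restated in the Fact itself (indeed, without it the disjoint union $K_k\sqcup K_k$ would violate~(ii), and the paper's own proof of the case $d(v)=k-1$ likewise needs it).
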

\begin{proof}
  First suppose that $|R(v)|=k+1$.  Thus, since $d(v)\leq k$ (recall
  that $v$ is a vertex of minimum degree), we know that $v$ has
  exactly $k$ neighbours.  A clique $K_k$ on $N(v)$ would contradict
  the definition of $R(v)$ so there is a non-edge in $R(v)$, say
  between vertices $u$ and $w$.  Since $w$ has degree at least $k$,
  there is an edge $\{w,z\}$ between $w$ and a vertex $z$ outside of
  $N(v)$.  However, $\{w,z\}$ is also contained in a $K_k$, so $w$
  cannot be in $R(v)$, a contradiction, so $|R(v)| \leq k$.

    For item~\ref{fact:b_v-1}, it is enough to show that any vertex that is contained in $G_v$ is contained in a $K_k$.
    Since $|G|\geq k+1$ and at most $k$ vertices are removed, at least one vertex is left.
    This vertex is contained in a copy of $K_k$ that $v$ is not contained in and, therefore, no vertex of this copy is in $R(v)$.
    If $G_v$ contains at most $k$ vertices, then it is actually isomorphic to $K_k$.

    For item~\ref{fact:b_v-2}, suppose for a contradiction that $|R(v)|=k$.
    Then, since $d(v)\leq k$, no vertex in $R(v)$ has neighbours outside of $R(v)$.
    If $d(v)=k-1$, then $G$ is a $K_k$, a contradiction with the fact that $|G|\geq k+1$.
    If $d(v)=k$, then since all vertices in $R(v)$ have degree at least $k$, the neighbourhood of $v$ induces a $K_k$, contradicting the assumption that $|R(v)|=k$.
    Therefore, $|R(v)|\leq k-1$.
\end{proof}

Note that since $d(v)\leq k$ and all vertices and edges are in a copy
of $K_k$, the subgraph $K(v)$ is isomorphic to either a $K_k$,
$K_{k+1}^-$, or $K_{k+1}$.  Indeed, if $|K(v)|=k+1$ and two edges $uw$
and $wz$ are missing, then the edge $vw$ would not be contained in a
copy of $K_k$, and if two parallel edges are missing, then $v$ would
not be contained in a copy of $K_k$ at all.  In the following
proposition we categorise $K(v)$ according to its structure.  For
brevity, in what follows, we will abuse notation and write $K(v)=K_k$
or say that $K(v)$ is a $K_k$, while we mean that $K(v)$ is isomorphic
to $K_k$.

\begin{proposition}\label{def:charac}
    Let $k\geq 5$ and let $G$ be a connected graph on at least $k+1$ vertices with $m(G) < (k+1)/2$ such that all vertices and edges of $G$ are contained in a single $K_k$-component.
    Let $v$ be a vertex of minimum degree in $G$.
    Then the subgraph $K(v)$ of $G$ has one of the configurations $X_\ell$, $Y_\ell$ or $U_1$, defined as follows:
    \begin{itemize}
        \item $X_\ell$: $K(v)=K_k$, and $R(v)=K_\ell$, and $S(v)=K_{k-\ell}$ $(1\leq \ell\leq k-2)$;
        \item $Y_\ell$: $K(v)=K^-_{k+1}$, and $R(v)=K_\ell$, and $S(v)=K^-_{k-\ell+1}$ $(1\leq \ell\leq k-2)$;
        \item $U_1$:    $K(v)=K_{k+1}$, and $R(v)=K_1$, and $S(v)=K_k$.
    \end{itemize}
\end{proposition}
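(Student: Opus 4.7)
The plan is to split on $d(v) \in \{k-1, k\}$ and, when $d(v) = k$, on the structure around the extra neighbour $u$ of $v$ outside a fixed $K_k$ through $v$. Since $v$ lies in a $K_k$, we have $d(v) \geq k-1$, while $m(G) < (k+1)/2$ forces the minimum degree to be at most $k$. If $d(v) = k-1$, a $K_k$ through $v$ uses all of $v$'s neighbours, so $K(v) = K_k$. If $d(v) = k$, then $\{v, u\}$ must lie in some $K_k$, which forces $u$ to be adjacent to at least $k-2$ of the $k-1$ vertices in $N(v) \setminus \{u\}$, yielding $K(v) \in \{K_{k+1}^-, K_{k+1}\}$.

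Next, I determine $R(v)$ and $S(v)$ in each of the three shapes. If $K(v) = K_{k+1}$, then $N(v)$ itself is a $K_k$ avoiding $v$, so every $w \in N(v)$ lies outside $R(v)$; hence $R(v) = K_1$ and $S(v) = K_k$, which is $U_1$. If $K(v) = K_{k+1}^-$ with non-edge $\{a, b\}$, I first show $a, b \notin R(v)$: since $d(a) \geq d(v) = k$ but $a$ has only $k-1$ neighbours inside $K(v)$, $a$ has a neighbour $z$ outside $V(K(v))$; then $z$ is not adjacent to $v$, so any $K_k$ through $\{a, z\}$ avoids $v$, giving $a \notin R(v)$, and symmetrically $b \notin R(v)$. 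Consequently $R(v)$ is induced on $\{v\} \cup A$ with $A \subseteq N(v) \setminus \{a, b\}$, a clique in $K_{k+1}^-$, so $R(v) = K_\ell$ with $\ell = |A|+1$; and $S(v)$ is induced on $B = N(v) \setminus A \supseteq \{a, b\}$, giving $S(v) = K_{k-\ell+1}^-$. The case $K(v) = K_k$ is immediate since $R(v)$ and $S(v)$ are automatically induced subgraphs of $K_k$.

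Finally I verify $1 \leq \ell \leq k-2$ in the $X$ and $Y$ families. The lower bound holds because $v \in R(v)$, and $\ell \leq k-1$ comes from Fact~\ref{fact:b_v}\,(ii). To exclude $\ell = k-1$, I combine two observations: every $u \in A$ has all its neighbours inside $K(v)$, since otherwise an outside neighbour $y$ would place the edge $\{u, y\}$ in a $K_k$ avoiding $v$, contradicting $u \in R(v)$; and when $\ell = k-1$, a vertex of $B$ (the unique $w$ in the $X$-case, or $a$ in the $Y$-case) lies in an external $K_k$ $Q$ whose intersection with $V(K(v))$ is that single $B$-vertex, because $Q \cap A = \emptyset$ by the defining property of $R(v)$. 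The first observation lets me enumerate the $K_k$'s through each edge of $K(v)$ and show they all lie inside $K(v)$; combined with the second, the external $K_k$ $Q$ shares no edge with any $K_k$ reachable from $K(v)$ in the $K_k$-component auxiliary graph, contradicting the hypothesis that $G$ is a single $K_k$-component. I expect this last step to be the main obstacle, since it requires carefully enumerating the $K_k$'s through each kind of edge of $K(v)$ and checking that none of them escape $V(K(v))$.
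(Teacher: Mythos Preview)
Your proof is correct and follows essentially the same approach as the paper's, with the minor variation that you place the non-edge endpoints $a,b$ into $S(v)$ directly via an external-neighbour argument, whereas the paper reaches the same conclusion by a degree comparison. Your worry about the final step is unfounded: when $\ell=k-1$, every edge of $K(v)$ has at least one endpoint in $\{v\}\cup A$ (since $B$ is either a single vertex in the $X$-case or the non-adjacent pair $\{a,b\}$ in the $Y$-case), and you have already shown each such endpoint has all its neighbours in $V(K(v))$; hence any $K_k$ through an edge of $K(v)$ lies inside $V(K(v))$, with no case-by-case enumeration needed.
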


\begin{proof}
Clearly, if $K(v)=K_{k+1}$, then $R(v)=K_1$ and $S(v)=K_k$, which gives us the configuration $U_1$.
So it remains to discuss the cases $K(v)=K_k$ and $K(v)=K^-_{k+1}$.

First, we will show that since $G$ is a single $K_k$-component on at least $k+1$ vertices, we have $|R(v)|\leq k-2$.
In fact, from~\ref{fact:b_v}\ref{fact:b_v-2} we already know that $|R(v)|\leq k-1$.
Suppose that $|R(v)|=k-1$.
In this case, $K(v)$ cannot be a $K_{k+1}$, so $K(v)$ is either a $K_k$ or a~$K_{k+1}^-$.
If $K(v)=K_k$, then any edge incident to vertices of $R(v)$ is in $K(v)$.
Then, $K(v)$ is not $K_k$-connected with the other copies of $K_k$ in $G$, which implies that $G$ is not a single $K_k$-component, a contradiction.
If $K(v)=K_{k+1}^-$, then $|S(v)|=2$.
Moreover, $S(v)$ is an edge as otherwise $G$ would not be a single $K_k$-component.
But then, there is a missing edge $xy$ with $x\in R(v)$ and $y\in S(v)$, as otherwise there will be a copy of $K_k$ containing the vertices of $R(v)$ and not containing $v$.
But this implies $d(x)<d(v)$, a contradiction.
Therefore, we conclude that $|R(v)|\leq k-2$.

Since $|R(v)|\leq k-2$, note that if $K(v)=K_k$, then $R(v)=K_\ell$, and $S(v)=K_{k-\ell}$ for some $1\leq \ell\leq k-2$, which is the configuration $X_\ell$.

It is left to show that if $K(v)= K^-_{k+1}$ and $R(v)$ has $\ell$ vertices (for any $1\leq \ell\leq k-2$), then $R(v)=K_\ell$, and $S(v)=K^-_{k-\ell+1}$.
Suppose that $R(v)$ is not a $K_\ell$.
Then, since there is only one missing edge in $K(v)$, we have $R(v)=K_\ell^-$, from where we conclude that there is a vertex in $R(v)$ with degree smaller than $d(v)$, a contradiction.
Then, $R(v)=K_\ell$.
Now, we just note that if $S(v)$ is not a $K^-_{k-\ell+1}$, then the missing edge $xy$ of $K(v)$ is such that $x\in R(v)$ and $y\in S(v)$, which implies $d(x)<d(v)$, a contradiction, which concludes the proof.
\end{proof}

In Figure~\ref{fig:k_5} we show all possible structures for $K(v)$ when $k=5$.
In our proof we will use the fact that $m(G)<(k+1)/2$ to bound the number of occurrences of the configurations $X_\ell, Y_\ell$, and $U_1$ as $K(v)$ in the induction.

\begin{figure}
    \centering
    \begin{subfigure}{.3\linewidth}
        \centering
        \begin{tikzpicture}[line width=1.3pt]
            \tikzstyle{every node} = [draw,circle,minimum size=2mm,inner sep=0pt,outer sep=2.5pt,fill=black];

            \node (a) at (1,0) {};
            \node (b) at (1,2) {};
            \node (c) at (2,0) {};
            \node (d) at (2,2) {};

            \node (x)[fill=white] [label=left:{$v$}] at (-1,1) {};

            \graph {%
                {[clique] (a), (b), (c), (d), (x)}
            };
        \end{tikzpicture}
        \caption{$X_1$}
    \end{subfigure}
    \begin{subfigure}{.3\linewidth}
        \centering
        \begin{tikzpicture}[line width=1.3pt]
            \tikzstyle{every node} = [draw,circle,minimum size=2mm,inner sep=0pt,outer sep=2.5pt,fill=black];

            \node (a) at (1,2) {};
            \node (b) at (1,0) {};
            \node (c) at (1.5,1) {};

            \node (x)[fill=white] [label=left:{$v$}] at (-1,1.5) {};
            \node (y)[fill=white] at (-1,.5) {};

            \graph {%
                {[clique] (a), (b), (c), (x), (y)}
            };
        \end{tikzpicture}
        \caption{$X_2$}
    \end{subfigure}
    \begin{subfigure}{.3\linewidth}
        \centering
        \begin{tikzpicture}[line width=1.3pt]
            \tikzstyle{every node} = [draw,circle,minimum size=2mm,inner sep=0pt,outer sep=2.5pt,fill=black];

            \node (a)[fill=white] at (-1,2) {};
            \node (b)[fill=white] at (-1,0) {};
            \node (c)[fill=white] [label=left:{$v$}] at (-1.5,1) {};

            \node (x) at (1,1.5) {};
            \node (y) at (1,.5) {};

            \graph {%
                {[clique] (a), (b), (c), (x), (y)}
            };
        \end{tikzpicture}
        \caption{$X_3$}
    \end{subfigure}
    \begin{subfigure}{.3\linewidth}
        \centering
        \begin{tikzpicture}[line width=1.3pt]
            \tikzstyle{every node} = [draw,circle,minimum size=2mm,inner sep=0pt,outer sep=2.5pt,fill=black];

            \node (x)[fill=white] [label=left:{$v$}] at (-1,1) {};

            \begin{scope}[shift={(1.5,1)}]
                \node (a) at (0:1) {};
                \node (b) at (72:1) {};
                \node (c) at (144:1) {};
                \node (d) at (-144:1) {};
                \node (e) at (-72:1) {};
            \end{scope}

            \graph {%
                (c) -- {(a), (b), (x),(e)},
                (b) -- {(a), (d),(e)},
                (a) -- {(d), (x),(e)},
                (d) -- {(x), (e)},
                (x) --[bend right=20](e),
                (x) --[bend left=20](b),
                (c) --[dotted](d)
            };
        \end{tikzpicture}
        \caption{$Y_1$}
    \end{subfigure}
    \begin{subfigure}{.3\linewidth}
        \centering
        \begin{tikzpicture}[line width=1.3pt]
            \tikzstyle{every node} = [draw,circle,minimum size=2mm,inner sep=0pt,outer sep=2.5pt,fill=black];

            \node (x)[fill=white] [label=left:{$v$}] at (-1,1.5) {};
            \node (y)[fill=white] at (-1,.5) {};

            \node (a) at (1,1) {};
            \node (b) at (1.5,0) {};
            \node (c) at (2,1) {};
            \node (d) at (1.5,2) {};

            \graph {%
                (c) -- {(a), (b), (d)},
                (b) -- {(a), (x), (y)},
                (a) -- {(y), (x),(d)},
                (x) -- {(y),(d)},
                (y) -- (d),
                (y) -- [bend right=10](c),
                (x) --[bend left=10](c),
                (b) --[dotted](d)
            };
        \end{tikzpicture}
        \caption{$Y_2$}
    \end{subfigure}
    \begin{subfigure}{.3\linewidth}
        \centering
        \begin{tikzpicture}[line width=1.3pt]
            \tikzstyle{every node} = [draw,circle,minimum size=2mm,inner sep=0pt,outer sep=2.5pt,fill=black];

            \node (a)[fill=white] at (-1,2) {};
            \node (b)[fill=white] at (-1,0) {};
            \node (c)[fill=white] [label=left:{$v$}] at (-1.5,1) {};

            \node (x) at (1.5,1) {};
            \node (y) at (1,0) {};
            \node (z) at (1,2) {};

            \graph {%
                {[clique] (a), (b), (c)},
                {(a), (b), (c)} --[complete bipartite] {(x), (y), (z)},
                (x) -- {(y), (z)},
                (y) --[dotted](z)
            };
        \end{tikzpicture}
        \caption{$Y_3$}
    \end{subfigure}
    \begin{subfigure}{.3\linewidth}
        \centering
        \begin{tikzpicture}[line width=1.3pt]
            \tikzstyle{every node} = [draw,circle,minimum size=2mm,inner sep=0pt,outer sep=2.5pt,fill=black];

            \node (v)[fill=white] [label=left:{$v$}] at (-1,1) {};

            \begin{scope}[shift={(1.5,1)}]
                \node (a) at (0:1) {};
                \node (b) at (72:1) {};
                \node (c) at (144:1) {};
                \node (d) at (-144:1) {};
                \node (e) at (-72:1) {};
            \end{scope}

            \graph {%
                {[clique] (a), (b), (c), (d), (e)},
                (v) -- {(a), (c), (d)},
                (v) --[bend right=20](e),
                (v) --[bend left=20](b)
            };
        \end{tikzpicture}
        \caption{$U_1$}
    \end{subfigure}
    \caption{Possible configurations of $K(v)$ for $k=5$. Dotted lines represent non-edges, the vertices of $R(v)$ are white and the vertices of $S(v)$ are black.}
\label{fig:k_5}
\end{figure}
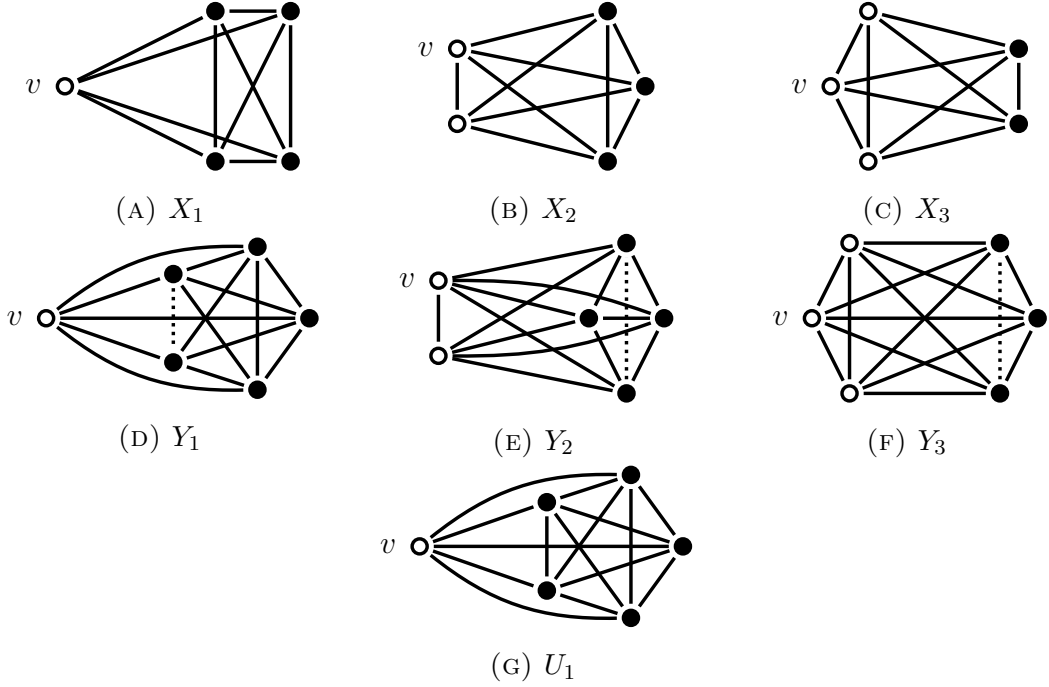

Using the characterisation given in Proposition~\ref{def:charac}, the number of vertices removed from $G$ to obtain $G_v$ is given in the subscripts of $X_\ell$, $Y_\ell$ and $U_1$, and the difference in the number of edges between $G_v^\ast$ and $G$ is as follows.
\begin{equation}\label{eq:edges}
    e(G) - e(G_v^\ast)
    =
    \begin{cases}
        k                                   & \quad \text{if } K(v) \text{ is } U_1, \\
        \binom{\ell}{2} + \ell(k-\ell)      & \quad \text{if } K(v) \text{ is } X_\ell, \\
        \binom{\ell}{2} + \ell(k-\ell+1)    & \quad \text{if } K(v) \text{ is } Y_\ell.
    \end{cases}
\end{equation}


We will use the following measure of how close $G$ is to the allowed upper bound $(k+1)/2$ on the density $m(G)$.
Set
\begin{equation*}
    b(G) := 2e(G) - (k+1)|G| + 2k.
\end{equation*}
The term $2k$ in $b(G)$ is chosen so that $b(K_k)=0$.
Moreover, from $e(G)/|G| \le m(G) < (k+1)/2$, we know that
\begin{equation}\label{eq:upperb}
    b(G) < 2k.
\end{equation}
Using~\eqref{eq:edges} we get
\begin{equation}\label{eq:badness}
    b(G) - b(G_v^\ast) 
    =
    \begin{cases}
        k-1               & \quad \text{if } K(v) \text{ is } U_1, \\
        (k-\ell-2) \ell   & \quad \text{if } K(v) \text{ is } X_\ell,\text{ for }(1\leq \ell\leq k-2), \\
        (k-\ell) \ell     & \quad \text{if } K(v) \text{ is } Y_\ell,\text{ for }(1\leq \ell\leq k-2).
    \end{cases}
\end{equation}
Note that there can be an arbitrary number of $X_{k-2}$'s in $G$ (they contribute~$0$ to $b(G)$), but because of the upper bound in~\eqref{eq:upperb} we know that all other types of $K(v)$ are limited to a small number of occurrences.
Since $(k- \ell - 2) \ell \ge k-3$ for $1 \le \ell \leq k-3$, and $(k- \ell) \ell \ge k-1$ for $1 \le \ell \leq k-2$, the following follows directly from~\eqref{eq:badness}.
\begin{equation}\label{eq:badness2}
    b(G)
    \geq
    \begin{cases}
        b(G_v^\ast) + k-1               & \quad \text{if } K(v) \text{ is } U_1 \text{ or } K(v) \text{ is }  Y_\ell,\text{ for }(1\leq \ell\leq k-2), \\
        b(G_v^\ast) + k-3   & \quad \text{if } K(v) \text{ is } X_\ell,\text{ for }(1\leq \ell\leq k-2).
    \end{cases}
\end{equation}


We will describe an inductive colouring strategy, which will always lead to an edge-colouring of $G$ with no rainbow copy of $K_k$.
To keep track of some additional properties of the colouring that will help us during the induction, we introduce five stages $P_0,\dots,P_4$, which guarantee the existence of a partial colouring of $G$ with some useful properties.

\begin{definition}[Stages]\label{def:stages}
    Let $0 \le j \le 4$.
    We say that $G$ is in stage $P_j$ or $G \in P_j$ if there exists a partial proper colouring of $G$ such that the following properties hold.
    \begin{enumerate}[label=\rom]
        \item\label{prop:non-rainbow} Any copy of $K_k$ in $G$ is non-rainbow;
        \item\label{prop:p_zero} If $G \in P_0$ then each colour is used exactly twice, in any copy of $K_k$ there are exactly two coloured edges, and any $4$ vertices span at most $3$ coloured edges;
            also, any two copies of $K_k$ intersect in at most one edge;
        \item\label{prop:four-vertices} If $G \in P_j$ $(1 \le j \le 3)$ then any $4$ vertices span at most $j+2$ coloured edges;
    \end{enumerate}
\end{definition}

Property~\ref{prop:non-rainbow} is the main property of the colouring we want to ensure.
Properties~\ref{prop:p_zero} and~\ref{prop:four-vertices} will allow us to keep the induction proof for Lemma~\ref{lemma:stages} going.
Note that, for $0\leq i\leq 3$, if $G\in P_i$ then $G\in P_{i+1}$.
We will inductively extend a partial colouring of $G_v^\ast$ to a partial colouring of $G$.
To allow such induction, we will prove that if $G_v^\ast$ is not in some stage $P_i$, then $b(G_v^\ast)$ is already ``large'', which implies that some configurations are forbidden for $K(v)$, as otherwise $b(G)$ would be too large (recall that $b(G)<2k$).

Lemma~\ref{lemma:lower-density} follows trivially from Definition~\ref{def:stages}\ref{prop:non-rainbow} and Lemma~\ref{lemma:stages} below.
\begin{lemma}\label{lemma:stages}
Let $k\geq 5$ and let $G$ be a connected graph on at least $k$ vertices with $m(G) < (k+1)/2$ such that all vertices and edges of $G$ are contained in a single $K_k$-component.
There exists $0 \le j \le 4$ and a proper partial edge-colouring of $G$ such that $G$ is in stage $P_j$ under this colouring.
Furthermore,
\begin{enumerate}[label=(\alph*)]
    \item $b(G) \geq 0$;\label{item-a}
    \item If $G\notin P_0$, then $b(G)\geq k-3$;\label{item-b}
    \item If $G\notin P_1$, then $b(G)\geq k-1$;\label{item-c}
    \item If $G\notin P_2$, then $b(G)\geq k+1$;\label{item-d}
    \item If $G\notin P_3$, then $b(G)\geq 2k-2$.\label{item-e}
\end{enumerate}
\end{lemma}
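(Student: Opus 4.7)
The plan is to prove the lemma by induction on $v(G)$. In the base case $v(G)=k$ the single-$K_k$-component hypothesis forces $G=K_k$; colouring any two vertex-disjoint edges with one fresh colour (possible since $k\ge 5$) places $G$ in stage $P_0$---the unique $K_k$ has exactly two coloured edges, that colour is used exactly twice, any four vertices span at most two coloured edges, and the pairwise-intersection condition is vacuous---and $b(K_k)=0$ by a direct computation, so (a)--(e) all hold.

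For the inductive step let $v(G)\ge k+1$ and pick a minimum-degree vertex $v$; by Proposition~\ref{def:charac}, $K(v)$ is one of $X_\ell$, $Y_\ell$ (with $1\le\ell\le k-2$) or $U_1$. Decompose $G_v$ into its $K_k$-components $C_1,\dots,C_t$. Each $C_i$ contains a $K_k$ and is strictly smaller than $G$ (since $v$ is removed), is connected and a single $K_k$-component, and inherits $m(C_i)\le m(G)<(k+1)/2$; Fact~\ref{fact:b_v}\ref{fact:b_v-1} guarantees the decomposition is nonempty. The induction hypothesis then supplies a partial proper colouring of each $C_i$ that places it in some stage $P_{j_i}$ and satisfies the corresponding badness bound. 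Using disjoint palettes across components, the union is a partial proper colouring of $G_v^\ast$ under which every $K_k$ of $G_v$ is non-rainbow and whose colour-use and four-vertex conditions correspond to stage $P_j$ for $j:=\max_i j_i$.

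The extension to $G$ colours a small, configuration-dependent set of edges of $K(v)$ that are incident to $R(v)$ (these are exactly the edges in $E(G)\setminus E(G_v^\ast)$). By the definition of $R(v)$, the only newly-introduced $K_k$'s lie inside $K(v)$: for $X_\ell$ this is just $K(v)=K_k$ itself, broken by one fresh colour on a well-chosen matching pair of edges; for $Y_\ell$ the two copies of $K_k$ in $K(v)=K_{k+1}^-$ must be treated together by a pair of edges meeting both; and for $U_1$ the $k+1$ copies of $K_k$ inside $K(v)=K_{k+1}$ are handled simultaneously by one or two fresh colour classes on carefully chosen edges. Because all newly coloured edges lie in $V(K(v))$, checking properness and the four-vertex restriction against the already-coloured edges of $G_v^\ast$ reduces to a bounded combinatorial verification.

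Finally, for each triple (type of $K(v)$, entry stage $j$, resulting stage $j'$) one checks the relevant inequality among (a)--(e) for $G$ by combining the inductive bound on $b(C_i)$ for the component that drives the stage advance with the increment from~\eqref{eq:badness2}: $U_1$ and $Y_\ell$ contribute at least $k-1$ to $b(G)-b(G_v^\ast)$, whereas $X_\ell$ contributes $(k-\ell-2)\ell$, which is $0$ exactly when $\ell=k-2$ and at least $k-3$ otherwise. The main obstacle is the full case analysis, and within it the $X_{k-2}$ configuration, where the badness does not increase so the extension cannot advance the stage; this forces a careful choice of colours in $K(v)$ that reuses colours already present on $S(v)=K_2$ or adds them without violating the strict $P_0$ clauses ``each colour used exactly twice'' and ``any two $K_k$'s share at most one edge''. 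A second delicate case is $U_1$, where several new $K_k$'s arise at once inside $K_{k+1}$; here a forced stage advance is absorbed by the $k-1$ badness increment from~\eqref{eq:badness2}.
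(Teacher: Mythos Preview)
Your overall architecture---induction on $v(G)$, base case $G=K_k$, pick a minimum-degree vertex $v$, case-split on the type of $K(v)$, and track stage advances against the badness increments of~\eqref{eq:badness2}---matches the paper. But there is a genuine gap in the multi-component step.

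You assert that with disjoint palettes the union of the $C_i$-colourings places $G_v^\ast$ in stage $P_j$ with $j=\max_i j_i$. This is false. Property~\ref{prop:four-vertices} is a \emph{global} four-vertex bound, and the $K_k$-components, while edge-disjoint, share vertices in $S(v)$. Two components each in $P_1$ can together put far more than $3$ coloured edges on a common $4$-set inside $S(v)$, so the union need not be in $P_{\max_i j_i}$ (or in any stage $\le 3$). The paper does not take $\max_i j_i$; instead it defines a target stage $j_{\min}$ from the aggregate quantity $b_{\text{sum}}(G)=\sum_i\bigl(b(C_i)+|b(C_{i,S(v)})|\bigr)$ and proves separately (Sub-Claim~\ref{subclaim:Sv}) that $S(v)\in P_{j_{\min}}$. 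The point is that each coloured edge a component places inside $S(v)$ forces either $C_i\notin P_0$ (costing $b(C_i)\ge k-3$) or a large overlap $|C_i\cap S(v)|$ (costing $|b(C_{i,S(v)})|\ge k-3$), so many coloured edges on a common $4$-set force $b_{\text{sum}}(G)$ up and hence push $j_{\min}$ up in lockstep. Your outline contains no mechanism linking the four-vertex count in $S(v)$ to badness, so the inequalities (b)--(e) are unproved when $t\ge 2$.

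A second missing ingredient is the \emph{strengthened} induction hypothesis for components in $P_0$: the paper exploits that in a $P_0$ component one may prescribe a chosen edge to be uncoloured (or coloured), because such a component decomposes entirely through $X_{k-2}$ steps. This freedom is used both to keep $S(v)$ clean when $j_{\min}=0,1$ and, in the $X_1$ case, to arrange a usable coloured edge in $S(v)$. Without it, your ``bounded combinatorial verification'' cannot be carried out: for $K(v)=X_1$ with $j_{\min}\le 1$ you need an uncoloured edge in $S(v)$ with specific adjacency properties, and nothing in the bare induction hypothesis supplies it. Your sketch should therefore (i) replace $\max_i j_i$ by a badness-driven stage for $S(v)$ and prove it, and (ii) strengthen the induction statement for $P_0$ components to allow designating a single edge's colour status.
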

\begin{proof}
We prove the lemma by induction on the number of vertices of the graph $G$.
If $|G| = k$ then Fact~\ref{fact:b_v} implies that $G$ is a $K_k$ and then we can colour two non-adjacent edges of $G$ with the same colour, from where we conclude that $G$ is in $P_0$.
Also, $b(K_k) = 0$, so the lemma holds.

Now consider a graph $G$ on at least $k+1$ vertices satisfying the assumptions of the lemma.
Depending on $b(G)$, we have to show that $G$ is in a certain stage.
Let $v$ be a vertex of minimum degree in $G$.
Fact~\ref{fact:b_v}\ref{fact:b_v-1} implies that $G_v$ has at least $k$ vertices.
We will first handle the case that $G_v$ is a single $K_k$-component (Claim~\ref{claim:onecomponent}).
The case where there are multiple $K_k$-components will be considered in Claim~\ref{claim:multiplecomponents}.

\begin{claim}\label{claim:onecomponent}
If $G_v$ contains a single $K_k$-component, then Lemma~\ref{lemma:stages} holds.
\end{claim}

\begin{claimproof}[Proof of Claim~\ref{claim:onecomponent}]
By the inductive hypothesis, the lemma holds for $G_v$.
Let $j_v$ be the smaller index such that $G_v\in P_{j_v}$ and note that, for $1\leq j_v\leq 4$, if $G_v\in P_{j_v}$ then $G_v^*\in P_{j_v}$.
For $j_v=0$, it could be that $G_v\in P_{0}$ does not imply $G_v^*\in P_{0}$.
In fact, if $e(G_v^*\setminus G_v)>0$, then it could be that the edges in $G_v^*\setminus G_v$ form a $K_4$ in $G_v^*$ with $3$ coloured edges, because $4$ vertices of $G_v$ could span $3$ coloured edges.
Then, property $P_0$ would not hold for $G_v^*$.
But it is not hard to check that $G_v^*\in P_1$.
In view of this, we define
\begin{equation}
   j_v^*
    =
    \begin{cases}
        j_v            	& \quad \text{if } 1\leq j_v\leq 4, \\
        1		& \quad \text{if } j_v=0.
    \end{cases}
\end{equation}
For $1\leq j_v\leq 4$, as no edge in $E(G_v^\ast) \setminus E(G_v)$ is contained in  a copy of $K_k$, the partial edge-colouring that guarantees that $G_v\in P_{j_v}$ ensures that $G_v^\ast\in P_{j^\ast_v}$.
Thus, one can view $j^\ast_v$ as the smaller index such that one can always ensure that $G_v^\ast\in P_{j^\ast_v}$.

We will prove that $G$ is in some of the stages described in Definition~\ref{def:stages}.
More precisely, we will prove that
  \begin{align}\label{proof:promised}
  		&\text{if $K(v)=X_{k-2}$, then $G$ is in the same stage as $G_v^\ast$,}\nonumber\\
		&\text{if $K(v)=X_\ell$  $(1 \le \ell \leq k-3)$, then we advance at most one stage from $G_v^\ast$ to $G$,}\nonumber\\
		&\text{if $K(v)=Y_\ell$ $(1 \le \ell \leq k-2)$, then we advance at most two stages from $G_v^\ast$ to $G$},\nonumber\\
		&\text{if $K(v)=U_1$, then we advance at most two stages from $G_v^\ast$ to $G$}.
  \end{align}
  
  Note that, in the statement of the lemma, the difference in the
  bound on $b(G)$ between two consecutive stages $P_i$ and $P_{i+1}$
  is at most $k-3$ and between two stages $P_i$ and $P_{i+2}$ it is at
  most $k-1$.  From~\eqref{eq:badness2} and~\eqref{proof:promised} it
  is not hard to check that items \ref{item-a}--\ref{item-e} in the
  statement of the lemma hold.

Note that the only case that we are sure there will be no change in the stage from $G_v^\ast$ to $G$ is when $K(v)=X_{k-2}$ (recall that it contributes zero to $b(G)$).
By the difference $b(G)-b(G_v^\ast)$ described in \eqref{eq:badness} and the fact that $b(G)<2k$, the statement of the lemma applied on $G_v^\ast$ implies that
  \begin{align}\label{eq:jv}
		&\text{if }K(v)=X_\ell \ (1 \le \ell \leq k-3)\text{, then $b(G_v^\ast)\leq 2k-2$, which implies $G_v^*\in P_3$, so }j^\ast_v\leq 3,\nonumber\\
		&\text{if }K(v)=Y_\ell \ (1 \le \ell \leq k-2)\text{, then $b(G_v^\ast)\leq k+1$, which implies $G_v^*\in P_2$, so }j^\ast_v\leq 2,\\
		&\text{if }K(v)=U_1 \text{, then $b(G_v^\ast)\leq k+1$, which implies  $G_v^*\in P_2$, so } j^\ast_v\leq 2.\nonumber
  \end{align}
 
We will now give the desired partial edge-colouring that extends the edge-colouring of $G_v^\ast$ to $G$ and advances the stages in the promised way.
We split our proof into a few cases depending on the structure of $K(v)$.

\vspace{0.2cm}
\noindent\textit{Case $K(v)=X_{k-2}$.}
\vspace{0.2cm}

Since $K(v)=X_{k-2}$, the graph $G_v^\ast$ intersects $K(v)$ in exactly one edge.
We colour two disjoint edges, one contained in $R(v)$ and the other with one endpoint in $R(v)$, with a new colour.
These two edges do not close a coloured triangle
and clearly all sets of four vertices and copies of $K_k$ in $K(v)$ contain at most two coloured edges.
Also any four vertices containing one of the newly coloured edges can contain at most three coloured edges, so if $G_v^\ast\in P_0$ then Property~\ref{def:stages}\ref{prop:p_zero} holds for $G$, which implies that $G\in P_0$.
By the last part of this argument, Property~\ref{def:stages}\ref{prop:four-vertices} holds in $G$ if it did in $G_v^\ast$, so $G$ is in $P_{j^\ast_v}$.

\vspace{0.2cm}
\noindent\textit{Case $K(v)=X_\ell$ for $2 \le \ell \leq k-3$.}
\vspace{0.2cm}

By~\eqref{eq:jv}, we have $j_v^\ast\leq 3$.
We extend the current colouring in the following way:
if there is any coloured edge in $S(v)$, then we give this colour to one of the edges in $R(v)$, which contains an edge since $\ell \ge 2$.
Otherwise we choose a new colour and colour two disjoint edges that both intersect $R(v)$ with this colour.
In the first case it is trivial that $G$ is in $P_{j^\ast_v+1}$ and in the second it is easy to see  that $G$ is in $P_{j^\ast_v+1}$ as the only set of four vertices that contains the two new coloured edges has no other coloured edge.

\vspace{0.2cm}
\noindent\textit{Case $K(v)=X_1$.}
\vspace{0.2cm}

By~\eqref{eq:jv}, we have $j_v^\ast\leq 3$.
First suppose that $G_v^\ast \in P_0$.
If $S(v)$ already contains two coloured edges with the same colour, then we are done.
So assume this is not the case.
Since in $P_0$ any two copies of $K_k$ in $G_v^\ast$ intersect in at most one edge, any $K_{k-1}$ must be contained in a copy of $K_k$.
If $S(v)$ is a $K_k$, then there is a coloured edge in $S(v)$, say $e$, and its colour is used exactly twice.
Then, there is an edge incident to $v$ that we can colour with the colour of the edge $e$.
Note that any four vertices containing this newly coloured edge can contain at most three coloured edges, from where we conclude that $G$ is in $P_1$.
However, if $S(v)$ is not a $K_k$ and no edge of $S(v)$ is coloured then $e(G_v^\ast \setminus G_v)>0$ and it is enough to colour any edge of $S(v)$ and another edge incident to $v$ with a new colour and note that $G$ is in $P_2$.

Now suppose that $1\leq j_v^\ast\leq 3$.
Then, $G_v^\ast$  is in $P_1$, $P_2$ or $P_3$.
We choose an uncoloured edge $e$ in $S(v)$ and an edge $e'$ that is incident to $v$ and disjoint from $e$.
We colour $e$ and $e'$ with the same new colour.
On any four vertices not containing $v$ we increase the number of coloured edges by at most one, and any four vertices containing $v$ have at most four coloured edges.
Therefore, $G \in P_{j^\ast_v+1}$.
\vspace{0.2cm}

In the remaining cases ($K(v)=Y_\ell$ or $K(v)=U_1$), we always have $j_v^\ast\leq 2$ and we will show that we advance at most two stages.
Also note that unless $G_v^\ast$ is in stage $P_0$, we are allowed to colour two or three disjoint edges (in what follows we will use this fact repeatedly):
on any four vertices the number of coloured edges can increase by at most two, which is fine with Property~\ref{def:stages}~\ref{prop:four-vertices} as $j_v^\ast$ increases by two.
In case that $G_v^\ast$ is in $P_0$ we will separately verify that Property~\ref{def:stages}~\ref{prop:four-vertices} holds for $j=2$ in $G$, i.e., that any four vertices contain at most three edges.

\vspace{0.2cm}
\noindent\textit{Case $K(v)=Y_\ell$ for $1 \le \ell \leq k-2$.}
\vspace{0.2cm}

By~\eqref{eq:jv}, we have $j_v^\ast\leq 2$.
We have to deal with the two copies of $K_k$ contained in $K(v)$.
If $K(v)=Y_\ell$ for $2 \le \ell \leq k-2$, then there exist two disjoint edges $e$ and $e'$ incident to vertices of $R(v)$ that are contained in both copies of $K_k$.
Thus, we just give the same new colour to $e$ and $e'$, concluding that $G \in P_{j^\ast_v+2}$.

If $K(v)=Y_1$, then $S(v)=K_k^-$.
Now the two copies of $K_k$ contained in $K(v)$ intersect in a $K_{k-2}$.
If $G_v^\ast$ is in stage $P_0$, then this $K_{k-2}$ contains a triangle with vertices $\{x,y,z\}$ and hence an uncoloured edge, say $xy$.
We colour $xy$ and the edge $vz$ with the same new colour, which ensures that both copies of $K_k$ contained in $K(v)$ are non-rainbow.
Now any four vertices that contain $v$ contain at most three coloured edges;
also, for any four vertices that do not contain $v$ we only added one coloured edge so the number of coloured edges might have increased from three to four, so $G$ is in $P_2$.
Now suppose that $G_v^\ast \in P_1$.
Thus, no matter how the coloured edges are distributed, only using Property~\ref{def:stages}\ref{prop:four-vertices}, we can always find three disjoint uncoloured edges in $K(v)$ such that each of the copies of $K_k$ in $K(v)$ contains two of them.
Then, we colour these three disjoint edges with the same new colour.
Finally, suppose that $G_v^\ast \in P_2$.
It follows from Property~\ref{def:stages}\ref{prop:four-vertices} that there are two uncoloured edges $e$ and $e'$ not incident to $v$ (but not necessarily disjoint) that belong to the two copies of $K_k$ contained in $K(v)$.
For both copies of $K_k$ we can choose an additional edge incident to $v$ (say, edges $f$ and $f'$) such that colouring the edges $e$ and $f$ with the same new colour, and $e'$ and $f'$ with the same new colour (different from the colour given to $e$ and $f$) makes both copies of $K_k$ non-rainbow.
Recall that the only property we have to ensure to show that $G$ is in $P_4$ is that every copy of $K_k$ is non-rainbow.
Therefore, $G\in P_4$, which completes the case that $K(v)=Y_1$.

\vspace{0.2cm}
\noindent\textit{Case $K(v)=U_1$.}
\vspace{0.2cm}

By~\eqref{eq:jv}, we have $j_v^\ast\leq 2$.
It is easy to show that if any four vertices contain at most four coloured edges then five or more vertices contain two disjoint uncoloured edges.
Recall that for $K(v)=U_1$, the graph $S(v)$ is a $K_k$ obtained by removing $v$ from $K(v)$.
Then, Property~\ref{def:stages}\ref{prop:four-vertices} implies that there are two disjoint uncoloured edges $e$ and $e'$ in $S(v)$, which together with an edge $f$ incident to $v$ that is disjoint from $e$ and $e'$, form a set of three disjoint uncoloured edges.
We colour $e$, $e'$ and $f$ with the same new colour.
Note that if $G_v^\ast \in P_0$, all four-sets of vertices that contain two of the new coloured edges ($e$, $e'$ and $f$) either contain $v$ or is a $K_4$'s in $G_v^\ast$, so they contain at most four coloured edges in $G$, which implies that $G\in P_2$.
If $G_v^\ast$ is in $P_1$ or in $P_2$, then we observe as before that $G\in P_{j^\ast_v+2}$.
\end{claimproof}

It remains to prove that Lemma~\ref{lemma:stages} holds when $G_v$ has multiple $K_k$-components.

\begin{claim}\label{claim:multiplecomponents}
If $G_v$ contains more than one $K_k$-component, then Lemma~\ref{lemma:stages} holds.
\end{claim}

\begin{claimproof}[Proof of Claim~\ref{claim:multiplecomponents}]
  Since $G_v$ contains more than one $K_k$-component, removing $R(v)$
  from $G$ splits into edge-disjoint $K_k$-components $G_1, \dots,
  G_m$ for $m\geq 2$.  In this situation there is an extra
  complication, which is the fact that the colouring we give to the
  edges of $S(v)$ must be consistent with the colouring of the graphs
  $G_1, \dots, G_m$, which all contain some edges of $S(v)$.  On the
  other hand, large intersections of some $G_i$ with $S(v)$ contribute
  a lot to $b(G)$, which we will take advantage of.

Note that since $G_v$ contains more than one $K_k$-component, $K(v)$ is neither $X_{k-2}$ nor $U_1$, so we only have to deal with the other configurations.
We apply the induction hypothesis to each one of these $K_k$-components and, without loss of generality, we may assume that the components are vertex-disjoint in $G_v \setminus K(v)$:
intersecting in vertices would yield a denser graph and since all $K_k$-components can use different colours, combining the partial colourings would still yield a proper colouring at the vertices in which they intersect.

Recall that $b(G) = 2e(G) - (k+1)|G| + 2k$.
Let $G_{i,S(v)}$ be the subgraph of $G$ induced by the vertices that are in $S(v)\cap V(G_i)$.
Note that since $G_{i,S(v)}$ either has less than $k$ vertices or is a $K_k^-$, we have $b(G_{i,S(v)})\leq 0$.
As any component $G_i$ intersects $K(v)$ in at least one edge we get the following lower bound on $b(G)$.
\begin{align*}
b(G) 	&\ge b(K(v)) + \sum_{i=1}^{m} \big(b(G_i) - b(G_{i,S(v)})\big)\\
&= b(K(v)) + \sum_{i=1}^{m} \big(b(G_i) + |b(G_{i,S(v)})| \big).
\end{align*}
We will use this bound on $b(G)$ to limit the contributions of each $G_i$ to $b(G)$.
The following observations are helpful.
If $G_{i,S(v)}$ consists of a single edge, then by definition $|b(G_{i,S(v)})|=0$.
Moreover, one can check that
\begin{equation}\label{eq:k-3-edge}
\text{if $G_{i,S(v)}$ contains more than one edge, then $|b(G_{i,S(v)})|\geq k-3$.}
\end{equation}
If one of the $K_k$-components, say $G_1$, is in $P_0$, we will use the induction hypothesis in a slightly stronger version.
Note that for $G_1$ in $P_0$, if we repeatedly remove graphs $K(w)$ for minimum degree vertices $w$ of $G_1$, then we know that all such $K(w)$'s are $X_{k-2}$, as otherwise there would be two copies of $K_k$ sharing more than one edge, which contradicts the fact that $G_1$ is in stage $P_0$.
Then, by the colouring procedure we described before for extending $G_v^\ast$ to $G$ in case $K(v)=X_{k-2}$, we may always pick any edge $e \in G_1$ and ensure that $e$ is uncoloured and any $K_3$ containing $e$ also contains another uncoloured edge.
Thus for all components $G_i$ that are in $P_0$ and intersect $S(v)$ in a single edge $e$, we know how to give a partial colouring of $G_i$ that respects Definition~\ref{def:stages} and ensure that $e$ is uncoloured.
Alternatively, we can also guarantee that a given edge $e$ is coloured.

Furthermore, the stronger induction hypothesis also applies to the case when $G_v$ contains more than one $K_k$-component and $b(G) \le k-4$.
In general if $b(G) \le k-4$, then any two copies of $K_k$ intersect in at most a single edge and there is no cycle chain of copies of $K_k$.
This implies that there is another choice of $v$ such that $G_v$ only contains a single $K_k$-component and the above argument gives the desired statement.

Before we take care of the copies of $K_k$ that are contained in $G$ but not in $G_v$ (i.e., the copies of $K_k$ contained in $K(v)$) we deal with the combination of the colourings of the $G_i$ in $S(v)$.
For that, since there is no copy of $K_k$ in $S(v)$, we only have to check conditions \ref{prop:p_zero} and \ref{prop:four-vertices} of Definition~\ref{def:stages}.

In view of items $\ref{item-a}$--$\ref{item-e}$ of the statement of the lemma, we define $j_{\min}$ as follows, where we use $b_{\text{sum}}(G)$ for $\sum_{i=1}^{m} \big(b(G_i) + |b(G_{i,S(v)})| \big)$.
\begin{equation}\label{eq:jmin}
   j_{\min}=
    \begin{cases}
        0            	& \quad \text{if \hspace{0.75cm}} 0\leq  b_{\text{sum}}(G)< k-3, \\
        1		& \quad \text{if } k-3 \leq b_{\text{sum}}(G)< k-1, \\
        2		& \quad \text{if } k-1\leq b_{\text{sum}}(G)< k, \\
        3		& \quad \text{if } k \leq b_{\text{sum}}(G)< 2k-2.
    \end{cases}
\end{equation}

We will show that $S(v) \in P_{j_{\min}}$ (Sub-Claim~\ref{subclaim:Sv}).
Then, we deal with the copies of $K_k$ in $K(v)$ to prove that in fact we have $G \in P_{j_{\min}}$ (Sub-Claim~\ref{subclaim:G}).
In view of~\eqref{eq:jmin} it is clear that Sub-Claim~\ref{subclaim:G} implies the statement of Claim~\ref{claim:multiplecomponents}.

\begin{subclaim}\label{subclaim:Sv}
The graph $S(v)$ is in $P_{j_{\min}}$.
\end{subclaim}

\begin{claimproof}[Proof of Sub-Claim~\ref{subclaim:Sv}]
If $j_{\min}=0$, then $b_{\text{sum}}(G)< k-3$, which from~\eqref{eq:k-3-edge} implies that all $K_k$-components are in $P_0$.
Therefore, 
\begin{equation}\label{eq:propJ0}	
\text{there are no coloured edges within $S(v)$,}
\end{equation}
which trivially implies $S(v) \in P_{{0}}$.

For $j_{\min}\in\{1,2,3\}$, we only need to show that Property~\ref{prop:four-vertices} of Definition~\ref{def:stages} holds in $S(v)$, which says that any $4$ vertices span at most $j_{\min}+2$ coloured edges.
We now argue that in $S(v)$ we can not have too many coloured edges, as any coloured edge in $K(v)$ belongs to a $K_k$-component.
In fact, from the induction hypothesis,
\begin{equation}\label{eq:g1notP0lower}
\text{if $G_i\notin P_0$, then $b(G_i) \ge k-3$},
\end{equation}
and in case $G_i\in P_0$, the graph $G_{i,S(v)}$ contains more than one edge if one is coloured.
Then, from~\eqref{eq:k-3-edge}, we know that
\begin{equation}\label{eq:g1P0lower}
\text{if $G_i\in P_0$, then $|b(G_{i,S(v)})| \ge k-3$},
\end{equation}
In conclusion, every $K_k$-component $G_i$ that shares a coloured edge
with $S(v)$ contributes at least $k-3$ to $b_{\text{sum}}(G)$.

If $j_{\min}=1$ then, in view of~\eqref{eq:jmin}, 
\begin{equation}\label{eq:propJ1}	
\text{at most one of the graphs $G_i$ contributes with coloured edges to $S(v)$.}
\end{equation}
In fact, if a $K_k$-component $G_i$ contributes with coloured edges to
$S(v)$, then $G_i$ is not in $P_0$ (because of the stronger induction
hypothesis).  But then, in case there are at least two
$K_k$-components that contribute with coloured edges to $S(v)$, we
know from~\eqref{eq:k-3-edge} that $b_{\text{sum}}(G)\geq 2(k-3)\geq
k-1$, a contradiction with~\eqref{eq:jmin}.  Therefore, since
$b_{\text{sum}}(G)\leq k-1$, the induction hypothesis implies that
$G_i\in P_1$ and we are done.

If $j_{\min}=2$, then we have to argue that there can not be more than $4$ coloured edges spanned by $4$ vertices in $S(v)$.
Thus, suppose for a contradiction that $S(v)$ contains a set $S_4$ of~$4$ vertices that spans~$5$ coloured edges.
Since all $G_i$'s are in $P_2$, which implies that any~$4$ vertices span at most $4$ coloured edges (see Definition~\ref{def:stages}), if there is only one $G_i$ which contributes with coloured edges to $S(v)$, then we are done.
Thus we may assume there are at least two $G_i$'s contributing with coloured edges to $S(v)$.
But note that
\begin{equation}\label{eq:propJ2}	
\text{there are at most two $G_i$'s with coloured edges in $G_{i,S(v)}$ and they are in $P_1$,}
\end{equation}
as otherwise we would have $b_{\text{sum}}(G)\geq k$ (from~\ref{item-c},~\eqref{eq:g1notP0lower} and~\eqref{eq:g1P0lower}), which contradicts~\eqref{eq:jmin}.
Then, for any $4$ vertices in $S(v)$, each $G_{i,S(v)}$ contributes with at most $3$ coloured edges.
Suppose now that $G_1$ and $G_2$ are in~$P_0$.
Since there is no fully coloured triangle in a single $G_i$, there has to be one $G_i$ that contributes with a coloured tree on $4$ vertices in $S_4$.
Therefore,
\begin{align}\label{eq:twoP0}
b_{\text{sum}}(G) 	&\ge |b(G_{1,S(v)})| + |b(G_{2,S(v)})|\nonumber\\
		&\ge 4(k+1)  + 3(k+1) -2\cdot 6 - 4k\\
		& >k-1,\nonumber
\end{align}
a contradiction with~\eqref{eq:jmin}.
On the other hand if one of the $G_i$'s, say $G_1$, is not in $P_0$ (but $G_1$ is in $P_1$), then there might be a coloured triangle, in which case we guarantee only three vertices in each of $G_{1,S(v)}$ and $G_{2,S(v)}$, but we still get a contradiction using~\eqref{eq:g1P0lower}.
\begin{align}\label{eq:oneP0}
b_{\text{sum}}(G) 	&\ge b(G_1) + |b(G_{1,S(v)})| + |b(G_{2,S(v)})|\nonumber\\
				&\ge (k-3) + 6 (k+1)  -2 \cdot 6 - 4k\\
				&>k-1,\nonumber
\end{align}
a contradiction with~\eqref{eq:jmin}.

Finally, suppose $j_{\min}=3$, which implies from~\eqref{eq:jmin} that
$b_{\text{sum}}(G)< 2k-2$.  We aim to show that in $S(v)$ any $4$
vertices span at most $5$ coloured edges.  Similar as before suppose
for a contradiction that $S(v)$ contains a set $S_4$ of~$4$ vertices
that spans~$6$ coloured edges, i.e., it is completely coloured.  Then,
these coloured edges can not be from a single $G_{i,S(v)}$, as $G_i
\in P_3$.  It is easy to check that there are at most three $G_i$'s.
If there are exactly three of them, then they are all in~$P_1$, as
otherwise we would have from~\ref{item-c},~\eqref{eq:g1notP0lower}
and~\eqref{eq:g1P0lower} that $b_{\text{sum}}(G) \ge 2(k-3) +
(k-1)\geq 2k-2$, a contradiction with~\eqref{eq:jmin}.  If $G_1$,
$G_2$ and $G_3$ are in $P_0$, then
\begin{align*}
b_{\text{sum}}(G) 		&\ge |b(G_{1,S(v)})| + |b(G_{2,S(v)})| + |b(G_{3,S(v)})|\\
					&\ge 9 (k+1) - 6k -2 \cdot 6\\
					&\geq 2k-2,
\end{align*}
a contradiction with~\eqref{eq:jmin}.  If one of them is not in $P_0$,
similar as in case $j_{\min}=2$ we get $b_{\text{sum}}(G) \geq
(k-3)+8(k+1)-6k-12\geq 2k-2$, a contradiction.

So we may assume there are only two $K_k$-components, say $G_1$ and $G_2$.
It is easy to check that they all are in $P_2$ as otherwise we would have a contradiction with~\eqref{eq:jmin}.
If $G_1$ and $G_2$ are in $P_0$ we get $b_{\text{sum}}(G) \ge 4(k+1)  + 3(k+1) -2\cdot 6 - 4k\geq 2k-2$, similar as we did in~\eqref{eq:twoP0}.
So, we may assume w.l.o.g. that $G_1$ is not in $P_0$.
If $G_1$ is in $P_1$, then different than in case $j_{\min}=2$, even if there is a coloured triangle we guarantee that one of $G_1$ and $G_2$ contributes with a coloured tree on $4$ vertices in $S_4$.
This is because $S_4$ is fully coloured and each of them has at most $3$ coloured edges in $S_4$ (they are in $P_1$).
Then, we get 
\begin{align*}
b_{\text{sum}}(G) 	&\ge b(G_1) + |b(G_{1,S(v)})| + |b(G_{2,S(v)})|\nonumber\\
				&\ge (k-3) + 7(k+1)  -2 \cdot 6 - 4k\\
				&\geq 2k-2,\nonumber
\end{align*}
a contradiction.
So, we may assume w.l.o.g. that $G_1$ is in $P_2$.
Then,
\begin{align*}
b_{\text{sum}}(G) 	&\ge b(G_1) + |b(G_{1,S(v)})| + |b(G_{2,S(v)})|\\
				&\ge (k-1) + 6 (k+1)  -2 \cdot 6 - 4k\\
				&\geq 2k-2,\\
\end{align*}
which is again a contradiction, which concludes the proof that  $S(v)$ is in $P_{j_{\min}}$.
\end{claimproof}

It is left to prove that $G\in P_{j_{\min}}$.

\begin{subclaim}\label{subclaim:G}
The graph $G$ is in $P_{j_{\min}}$.
\end{subclaim}

\begin{claimproof}[Proof of Sub-Claim~\ref{subclaim:G}]
Since Sub-Claim~\ref{subclaim:Sv} is already proved, it remains to deal with the copies of $K_k$ contained in $K(v)$.
As in the case where $G_v$ is only a single $K_k$-component (Claim~\ref{claim:onecomponent}), we split the proof depending on the structure of $K(v)$.
Recall that since $G_v$ contains more than one $K_k$-component, $K(v)$ is neither $X_{k-2}$ nor $U_1$.

\vspace{0.2cm}
\noindent\textit{Case $K(v)=X_\ell$ for $2\leq \ell \leq k-3$.}
\vspace{0.2cm}

We proceed exactly like in the proof of Claim~\ref{claim:onecomponent}, which means that we colour an edge within $R(v)$ if there is a coloured edge in $S(v)$ or we colour two parallel edges otherwise.

\vspace{0.2cm}
\noindent\textit{Case $K(v)=Y_\ell$ for $2\leq \ell \leq k-2$.}
\vspace{0.2cm}

In this case we also proceed as in the proof of Claim~\ref{claim:onecomponent}.
We can pick two disjoint edges incident to $R(v)$ that are contained in both copies of $K_k$ in $K(v)$ and give a new colour to both of them.

\vspace{0.2cm}
\noindent\textit{Case $K(v)=X_1$.}
\vspace{0.2cm}

The case $j_{\min}=0$ was already covered earlier by the stronger
induction hypothesis, because then $b(G) \le k-4$ and $G \in P_0$.  If
$j_{\min}=1$ then either only one $G_i$ intersects $S(v)$ in more than
a single edge and all are in $P_0$ or all but one $G_i$ are in $P_0$
and each $G_i$ intersects $S(v)$ only in a single edge.  Let $G_1$ be
the special $G_i$ in both cases.  In the first case we use the
stronger induction hypothesis to ensure that there is a coloured edge
of $G_1$ in $S(v)$.  Then there is an edge incident to $v$ that is not
incident to $G_1$, which we can give the same colour.  In the latter
case we colour two disjoint edges not incident to $G_1$ and get $G \in
P_1$ as only in $G_1$ there could be a coloured triangle.

If $j_{\min}=2$ then there can be either one $K_k$-component which
contributes more than $k-3$ to $b(G)$ or two $K_k$-components that
contribute with at most $k-3$ to $b(G)$.  First, consider that there
are $K_k$-components $G_1$ and $G_2$ that contribute with at most
$k-3$ each to $b(G)$.  If there are at most $3$ coloured edges on each
set of $4$ vertices in $S(v)$, then we can proceed as in
Claim~\ref{claim:onecomponent}.  So, suppose that any set of $4$
vertices in $S(v)$ contains $4$ coloured edges.  As
in~\eqref{eq:propJ2}, there are at most two $K_k$-components $G_1$ and
$G_2$ that contribute with coloured edges to $S(v)$ and they are in
$P_1$, which implies that in any set of $4$ vertices $S$ of $S(v)$,
each of $G_1$ and $G_2$ contains only $3$ coloured edges.  Therefore,
we can use one of the colours in $S$ to colour an uncoloured edges of
$S$ keeping the colouring proper.  Now suppose, that there is only one
$K_k$-component $G_1$ contributing positively to $b(G)$.  If $G_1
\not\in P_1$, then it does not intersect $S(v)$ in more than one edge
and we can easily colour an uncoloured edge of $S(v)$ and an edge
incident to $v$ such that $G \in P_2$.  When $G_1$ intersects $S(v)$
in more than one edge we have $G_1 \in P_1$ and again easily colour
two edges such that $G \in P_2$.

If $j_{\min}=3$, then there are at most $5$ coloured edges on any set of $4$ vertices in $S(v)$.
If there are at most $4$ coloured edges on any set of $4$ vertices in $S(v)$, then we proceed as in Claim~\ref{claim:onecomponent}.
Thus, we may assume that there is a set of $4$ vertices in $S(v)$ with exactly $5$ coloured edges.
It is enough to observe that these $5$ edges cannot come from the same $K_k$-component, $G_1$ say, and that not all $G_i$ involved can contain all $4$ vertices.
Therefore, we can colour an edge incident to $v$ with the same colour as one coloured edge of $S(v)$ without any conflict.

\vspace{0.2cm}
\noindent\textit{Case $K(v)=Y_1$.}
\vspace{0.2cm}

If $K(v)=Y_1$ then we can proceed similar to the colouring given in
Claim~\ref{claim:onecomponent}.  As $b(K(v)) = b(K_{k+1}^-)=k-3$ we
have $j_{\min}\neq 0$ and for $j_{\min}\ge 1$ we consider three
vertices inside $S(v)$ that are contained in both copies of $K_k$.  We
want to colour one edge inside these three vertices and the edge
connecting the third to $v$.  For $j_{\min}= 1$ this is possible
because all $G_i$ are in $P_0$ and there are no coloured edges in
$S(v)$ so far, which gives $G \in P_1$.  When $j_{\min}=2$, there is
at most one coloured edge or a single $G_i \not\in P_0$ (which
contributes with no coloured edges to $S(v)$) and thus this is also
possible and $G \in P_2$.

Lastly, for $j_{\min}=3$, we only fail if there is a completely
coloured triangle which was created by: (i) a graph $G_1 \in P_1$ with
$b(G_1) \ge k-3$ and $|b(G_{1,S(v)})| \ge k-3$, or (ii) graphs $G_1
\in P_0$ and $G_2 \in P_0$ with $|b(G_{1,S(v)})| \ge k-3$ and
$|b(G_{2,S(v)})| \ge k-3$.  In case (i) there is no other coloured
edge but in this triangle, and therefore we can easily colour two
edges incident to $v$ with colours from this triangle to make both
copies of $K_k$ non-rainbow.  In case (ii) we can do something
similar, as $G_1$ and $G_2$ can not contain $K_{k+1}^-$ and therefore
both copies of $K_k$ contain a vertex uncovered by $G_1$ or $G_2$ that
together with $v$ can be coloured using a colour from the triangle.
\end{claimproof}
Since we proved Sub-Claims~\ref{subclaim:Sv} and~\ref{subclaim:G}, we conclude that Claim~\ref{claim:multiplecomponents} holds.
\end{claimproof}
Claims~\ref{claim:onecomponent} and~\ref{claim:multiplecomponents} imply that Lemma~\ref{lemma:stages} holds.
\end{proof}

\section{Proof of Theorem~\ref{thm:main} for $K_4$}
\label{sec:4vertices}

In this section we analyse the anti-Ramsey threshold for $K_4$, and
show that $\pmc {K_4} = n^{-7/15}$. 
For the upper bound on $\pmc {K_4}$, let $J$ be the graph obtained from $K_{3,4}$ with partition classes $\{a,b,c\}$ and $\{w,x,y,z\}$ by adding the edges $ab$, $ac$ and $bc$.
It is easy to see that in any proper colouring of $J$ there is a rainbow $K_4$.
Therefore the upper bound
\begin{equation}\label{eq:upperK4}
    \pmc {K_4} \le n^{-7/15}
\end{equation}
follows from Theorem~\ref{thm:Bollobas} below applied with $H=J$.

\begin{theorem}[Bollob\'as~\cite{Bo81}]\label{thm:Bollobas}
    Let $H$ be a fixed graph.
    Then, $p=n^{-1/m(H)}$ is the threshold for the property that $G$ contains $H$.
\end{theorem}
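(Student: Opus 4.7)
The proof plan is a textbook application of the first- and second-moment methods, split into the $0$-statement ($p = o(n^{-1/m(H)})$ implies $H \not\subseteq G(n,p)$ a.a.s.) and the $1$-statement ($p = \omega(n^{-1/m(H)})$ implies $H \subseteq G(n,p)$ a.a.s.).

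For the $0$-statement, I would pick a subgraph $J^\ast \subseteq H$ attaining $|E(J^\ast)|/|V(J^\ast)| = m(H)$; since any copy of $H$ contains a copy of $J^\ast$, it suffices to show that $G(n,p)$ contains no copy of $J^\ast$ asymptotically almost surely. The expected number of such copies is at most $n^{|V(J^\ast)|} p^{|E(J^\ast)|}$, which is $o(1)$ precisely when $p = o(n^{-|V(J^\ast)|/|E(J^\ast)|}) = o(n^{-1/m(H)})$. Markov's inequality then closes this direction.

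For the $1$-statement, let $X$ count the labelled copies of $H$ in $G(n,p)$. Then $\mathbb{E}[X] = \Theta(n^{|V(H)|} p^{|E(H)|}) \to \infty$ in the hypothesised regime, since $|E(H)|/|V(H)| \le m(H)$. Splitting $\mathrm{Var}(X) = \sum \mathrm{Cov}(\mathbf{1}_{H_1 \subseteq G},\mathbf{1}_{H_2 \subseteq G})$ according to the isomorphism type $J$ of the edge-intersection $H_1 \cap H_2$ (pairs of copies sharing no edge are independent and contribute nothing), one obtains
\[
  \mathrm{Var}(X)
  \;=\; O\!\left(\sum_{\substack{J \subseteq H \\ |E(J)| \ge 1}} n^{2|V(H)| - |V(J)|}\, p^{2|E(H)| - |E(J)|}\right).
\]
Dividing by $\mathbb{E}[X]^2$, each term in the sum becomes $\bigl(n^{|V(J)|} p^{|E(J)|}\bigr)^{-1}$, which is $o(1)$ because $|E(J)|/|V(J)| \le m(H)$ together with $p < 1$ forces $n\, p^{|E(J)|/|V(J)|} \ge n\, p^{m(H)} = (p\, n^{1/m(H)})^{m(H)} \to \infty$. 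Chebyshev's inequality then yields $\Pr(X = 0) \to 0$.

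The only substantive technical step is the variance bound, where one must carefully enumerate ordered pairs of labelled copies of $H$ whose intersection realises each fixed subgraph $J$. The decisive inequality $n\, p^{|E(J)|/|V(J)|} \to \infty$ for every $J \subseteq H$ with $|E(J)| \ge 1$ is precisely what forces the threshold to be governed by the \emph{maximum} subgraph density rather than the global density $|E(H)|/|V(H)|$; this is also why the easy first-moment argument applied to $H$ itself is insufficient when $H$ is unbalanced, and why one must pass to $J^\ast$ in the $0$-statement.
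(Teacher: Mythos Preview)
The paper does not supply a proof of this theorem at all: it is quoted as the classical small-subgraph threshold result of Bollob\'as and is used as a black box (to derive~\eqref{eq:upperK4}). So there is no in-paper argument to compare against.

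That said, your proposal is the standard and correct proof. The $0$-statement via the densest subgraph $J^\ast$ and Markov is exactly right. In the $1$-statement your variance decomposition and the key estimate $n^{|V(J)|}p^{|E(J)|}\to\infty$ for every $J\subseteq H$ with at least one edge are the heart of the matter, and your derivation of that estimate from $|E(J)|/|V(J)|\le m(H)$ and $p\,n^{1/m(H)}\to\infty$ is clean. Two cosmetic remarks: you should state explicitly that in the regime of interest $p=o(1)$ (so that the inequality $p^{|E(J)|/|V(J)|}\ge p^{m(H)}$ goes the right way), and the enumeration of pairs $(H_1,H_2)$ with prescribed overlap $J$ deserves one more line --- the count is $O\bigl(n^{2|V(H)|-|V(J)|}\bigr)$ because once $H_1$ is placed and the copy of $J$ inside it is fixed, the remaining $|V(H)|-|V(J)|$ vertices of $H_2$ are free. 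With those two sentences added the argument is complete.
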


To show that $\pmc {K_4}\geq n^{-7/15}$ we follow a similar strategy as before, but we do not need the framework of~\cite{NePeSkSt14}, because we now have an even smaller upper bound $p \ll n^{-7/15} \ll n^{-2/(4+1)}$.

Let $G$ be a $K_4$-component with $m(G)<\frac{15}{7}$.
Observe that there always is a vertex $v$ of degree $4$ in $G$ and that the assertion of Fact~\ref{fact:b_v} still holds.
The only options for $K(v)$ are $X_1$, $X_2$ and $U_1$.
In principle, $Y_1$ and $Y_2$ would also be possible, but $Y_1$ could only occur alone and $Y_2$ is already too dense.
We define $b_{K_4}(G):=7 e(G) - 15 |G| + 18$ and note that $b_{K_4}(G)<18$ and $b_{K_4}(K_4)=0$.
Then 
\begin{equation*}
    b_{K_4}(G) - b_{K_4}(G_v) - 7 e(G_v^\ast\setminus G_v)
	=
	\begin{cases}
		6   & \quad \text{if } K(v) \text{ is } X_1, \\
		5   & \quad \text{if } K(v) \text{ is } X_2, \\
		13  & \quad \text{if } K(v) \text{ is } U_1.
	\end{cases}
\end{equation*}

Thus we can bound the number of occurrences of $X_1$, $X_2$ and $U_1$.
Configuration $X_1$ is the only case where $G_v$ could contain more than one $K_4$-component and there can be at most two different $K_4$-components, which both have one edge in common with $K(v)$.
It is thus easy to see, that any $K_4$-component $G$ with $m(G)<\tfrac{15}{7}$ contains at most $10$ vertices.

Now consider $G(n,p)$ with $p \ll n^{-7/15}$.
It follows from Markov's inequality and the union bound, that $G(n,p)$ does not contain a subgraph $G$ such that $m(G)\geq \frac{15}{7}$ and $|G|\leq 12$.
Therefore $G(n,p)$ does not contain a $K_4$-component $G$ with $m(G) \ge \frac{15}{7}$.

It remains to give the colouring of $G$ depending on the sequence of $K(v)$'s.
If $K(v)$ is $U_1$ then we are left with a single $K_4$ and it is easy to colour the whole $K_5$.
Now we claim that if $b_{K_4}(G)<6$ at most one edge is coloured in any $K_3$ and if $b_{K_4}(G)<12$ at most two edges are coloured in any $K_3$.
If $K(v)$ is $X_2$ we repeat the colour of the edge in $K(v) \cup G_v$ if that edge is coloured or otherwise we colour two new disjoint edges with a new colour, which both is fine with the above.
Only the case that $K(v)$ is $X_1$ is left to check.
If $G_v$ consists of only one $K_4$-component, then we colour one edge on the triangle $K(v) \cup G_v$ and a new edge with the same colour.
Since $X_1$ adds $6$ to $b_{K_4}(G)$ this is fine with our condition.
If $G_v$ splits in more than one $K_4$-component it is enough to observe that either we can ensure that the intersecting edges are uncoloured or we already have $b_{K_4}(G_v)>5$ and thus $b_{K_4}(G)$ will be at least $11$.

\section{Acknowledgement}
The authors would like to thank Gabriel F.~Barros for helpful comments on an earlier version of this paper and the anonymous referees for their constructive input.

\def\MR#1{}
\bibliographystyle{amsplain}
\bibliography{extracted.bib}

\end{document}